\theoremstyle{plain}
\newtheorem{thm}{Theorem}[section]
\crefname{thm}{Theorem}{Theorem}
\crefname{cor}{Corollary}{Corollary}
\newtheorem{lem}[thm]{Lemma}
\crefname{lem}{Lemma}{Lemma}
\newtheorem{prop}[thm]{Proposition}
\crefname{prop}{Proposition}{Proposition}
\theoremstyle{definition}
\newtheorem{dfn}[thm]{Definition}
\crefname{dfn}{Definition}{Definition}
\newtheorem{ex}[thm]{Example}
\crefname{ex}{Example}{Example}
\newtheorem{rmk}[thm]{Remark}
\crefname{rmk}{Remark}{Remark}
\newcommand{\id}{\mathrm{id}}
\newcommand{\RR}{\mathbb{R}}
\newcommand{\CC}{\mathbb{C}}
\newcommand{\EE}{\mathbb{E}}
\newcommand{\HH}{\mathbb{H}}
\newcommand{\ZZ}{\mathbb{Z}}
\newcommand{\tr}{\mathrm{tr}\,}
\newcommand{\Isom}{\mathrm{Isom}}
\newcommand{\Fix}[1]{\mathrm{Fix}(#1)}
\newcommand{\Grp}{\mathrm{Grp}}
\newcommand{\Aut}{\mathrm{Aut}}
\newcommand{\Inn}{\mathrm{Inn}}
\newcommand{\SL}{\mathrm{SL}}
\newcommand{\PSL}{\mathrm{PSL}}
\newcommand{\Qdle}{\mathrm{Qdle}}
\newcommand{\Conj}{\mathrm{Conj}}
\newcommand{\As}{\mathrm{As}}
\newcommand{\address}[1]{\bigskip{\small\noindent #1 \par}}
\newcommand{\email}[1]{{\small\noindent\textit{Email address}: \texttt{#1} \par}}
\title{On the quandles of isometries of the hyperbolic 3-space}
\author{Ryoya Kai}
\date{}
\begin{document}
    \maketitle
    \begin{abstract}
  A quandle is an algebraic structure 
  whose axioms are related to the Reidemeister moves 
  used in knot theory.
  In this paper, 
  we investigate the conjugate quandle 
  of the orientation-preserving isometry group $\PSL(2, \CC)$
  of hyperbolic 3-space and its subquandles.
  We introduce a quandle, denoted by $Q(\Gamma, \gamma)$, 
  associated with a pair $(\Gamma, \gamma)$.
  Here, $\Gamma$ is a Kleinian group,
  and $\gamma$ is a non-trivial element of $\Gamma$.
  This construction can be regarded as a generalization 
  of knot quandles to hyperbolic knots.
  Moreover, for pairs $(\Gamma, \gamma)$ satisfying certain conditions,
  we construct the canonical map 
  from $Q(\Gamma, \gamma)$ to the conjugate quandle of $\PSL(2,\CC)$,
  which is an injective quandle homomorphism with a discrete image.
\end{abstract}

\section{Introduction}
A knot is a smooth embedding of $S^1$ into $S^3$.
A hyperbolic knot is a knot 
whose complement has 
a complete hyperbolic structure of finite volume.
The holonomy representations of hyperbolic structures
are crucial for studying
hyperbolic structures of knot complements
(or more generally 3-manifolds).
In particular, 
a complete hyperbolic structure of finite volume
is a topological invariant by the Mostow rigidity theorem.
A holonomy representation associated with
the complete hyperbolic structure of finite volume
is faithful, and its image is discrete in $\PSL(2, \CC)$.

A quandle is an algebraic structure defined independently 
by Joyce \cite{Joyce-1982-ClassifyingInvariantKnotsKnot}
and Matveev \cite{Matveev-1982-DistributiveGroupoidsKnotTheory}.
The axioms correspond to the Reidemeister moves of the diagrams of a knot.
The knot quandle $Q(K)$ is defined for a knot $K$
and provides some invariants of $K$
(see \cite{Kamada-2017-SurfaceKnots4Space}).
For a quandle $X$, an $X$-coloring of a knot $K$ is 
a quandle homomorphism from 
the knot quandle $Q(K)$ to the quandle $X$.
A quandle coloring 
can be regarded as a refinement 
of a group homomorphism from the knot group 
$G(K) = \pi_1(S^3 \setminus K)$.

Inoue \cite{Inoue-2010-QuandleHyperbolicVolume}
and Inoue-Kabaya \cite{Inoue-2014-QuandleHomologyComplexVolume}
have investigated
the relationship between hyperbolic volume (or complex volume) and quandle homology.
They showed in \cite{Inoue-2014-QuandleHomologyComplexVolume}
that the complex volume of the complement
of a hyperbolic knot $K$ 
can be calculated using a $\mathcal{P}$-coloring of $K$,
where $\mathcal{P}$ is a quandle 
composed of 
parabolic elements in $\PSL(2, \CC)$.
It can be observed that the image of a quandle coloring $Q(K) \to \mathcal{P}$ 
for a hyperbolic knot $K$ is discrete in $\PSL(2, \CC)$ as follows:
Since hyperbolic knots are prime,
the natural map $i: Q(K) \to \As(Q(K))$ is injective,
where $\As(X)$ is the associated group of $X$
\cite[Corollary 3.6]{Ryder-1996-AlgebraicConditionDetermineWhether}.
The group $\As(Q(K))$ is isomorphic to the knot group $G(K)$
\cite[Corollary 3.3]{Fenn-1992-RacksLinksCodimensionTwo}.
Since $K$ is hyperbolic, 
there exists an injective group homomorphism $\rho: G(K) \to \PSL(2, \CC)$
with a discrete image.
Consequently,
the image of $\rho \circ i$ is discrete.
This argument depends on $Q(K)$ being 
the knot quandle of a prime knot $K$.

We introduce a quandle $Q(\Gamma, \gamma)$
for a pair consisting of a Kleinian group $\Gamma$
and a non-trivial element $\gamma \in \Gamma$
(see \cref{KleinQdle}).
This quandle can be regarded as 
a generalization of the knot quandles for hyperbolic knots
(see \cref{hypKnotQdle}).
Additionally, we introduce the canonical map $Q(\Gamma, \gamma) \to \Conj(\PSL(2,\CC))$
(see \cref{canonicalmap}).
The main theorem of this paper 
generalizes the result concerning 
the discreteness of hyperbolic knot quandles 
to the quandles $Q(\Gamma, \gamma)$ as follows:
\begin{thm}\label{mainthm}
  Let $\Gamma$ be a Kleinian group  
  and $\gamma$ a non-trivial element in $\Gamma$.
  Then the canonical map 
  $Q(\Gamma, \gamma) \to \Conj(\PSL(2, \CC))$
  is an injective quandle homomorphism.
  Moreover, if either
  \begin{enumerate}
    \item $\Gamma$ is isomorphic to the fundamental group of 
    a complete hyperbolic $3$-orbifold of finite volume
    and $\gamma$ is parabolic, or
    \item 
    $\gamma$ is not parabolic
    and the cardinality of $C_\Gamma(\gamma)$ is infinite,
  \end{enumerate}
  then its image is discrete in $\Conj(\PSL(2, \CC))$
  and is contained in a connected component of 
  $\Conj(\PSL(2, \CC))$.
\end{thm}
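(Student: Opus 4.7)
I would split the proof along the structural lines of the statement. The unconditional part asserts that the canonical map $Q(\Gamma, \gamma) \to \Conj(\PSL(2, \CC))$ is an injective quandle homomorphism. From the construction envisaged in \cref{KleinQdle} and \cref{canonicalmap}, I read $Q(\Gamma, \gamma)$ as a quandle modelling the $\Gamma$-conjugacy class of $\gamma$---concretely the coset space $\Gamma / C_\Gamma(\gamma)$ equipped with the conjugation-induced operation---and the canonical map as $[g] \mapsto g \gamma g^{-1}$. The quandle-homomorphism property should then be formal, because the conjugation operation on both sides is compatible with the inclusion $\Gamma \hookrightarrow \PSL(2, \CC)$. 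Injectivity reduces to noting that $g_1 \gamma g_1^{-1} = g_2 \gamma g_2^{-1}$ in $\PSL(2, \CC)$ forces $g_2^{-1} g_1 \in C_{\PSL(2, \CC)}(\gamma) \cap \Gamma = C_\Gamma(\gamma)$, so the two cosets coincide in $Q(\Gamma, \gamma)$.

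For the conditional part, the connected-component claim is the easier half: the image consists of $\Gamma$-conjugates of $\gamma$, all lying in the orbit of $\gamma$ under inner automorphisms of the connected Lie group $\PSL(2, \CC)$, which is a connected subset of $\PSL(2, \CC)$. For discreteness, since $\Gamma$ is Kleinian, it is a discrete subgroup of $\PSL(2, \CC)$, so any subset of $\Gamma$ is discrete in the subspace topology, and the image lies in $\Gamma$ by construction. The specific role of the hypotheses (i) and (ii) must therefore be to control the abstract-to-geometric identification of $Q(\Gamma, \gamma)$, in the spirit of Ryder's prime-knot criterion cited in the introduction. I expect case (i) to rely on the Margulis lemma and the horoball structure at cusps of finite-volume hyperbolic orbifolds, exploiting that $C_\Gamma(\gamma)$ is a virtually $\ZZ^2$ parabolic cusp subgroup; and case (ii) to exploit the virtually cyclic structure of $C_\Gamma(\gamma)$ inside the two-torus centralizer of $\gamma$ in $\PSL(2, \CC)$ (applicable to loxodromic $\gamma$, or to elliptic $\gamma$ sharing its axis with a loxodromic).

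The main expected obstacle is precisely this abstract-to-geometric alignment: even granting the formal definition of $Q(\Gamma, \gamma)$, showing that the canonical map does not collapse distinct elements---or conversely that it has image inside $\Gamma$ rather than some larger subset of $\PSL(2, \CC)$---under each hypothesis requires quandle-theoretic work analogous to Ryder's \cite{Ryder-1996-AlgebraicConditionDetermineWhether}, now tailored to the two geometric regimes. Once this identification is in place, the discreteness and single-component conclusions follow essentially automatically from the discreteness of $\Gamma$ in $\PSL(2, \CC)$ and the connectedness of $\PSL(2, \CC)$-conjugacy classes, both of which are well-known structural facts about Kleinian groups and the underlying Lie group.
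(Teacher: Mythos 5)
Your unconditional part is essentially the paper's own argument: by \cref{KleinQdle}, $Q(\Gamma,\gamma)$ \emph{is} (up to left/right coset conventions) the coset quandle of the triplet $(\Gamma, C_\Gamma(\gamma),\sigma_\gamma)$, the canonical map is conjugation $C_\Gamma(\gamma)g \mapsto \iota_f(g^{-1}\gamma g)$, and injectivity is exactly your observation that $C_\Gamma(\gamma)=\Gamma\cap C_{\PSL(2,\CC)}(\gamma)=\Gamma\cap H_\tau$ (this is \cref{injective_lemma} applied in \cref{inducedHom}). Because $Q(\Gamma,\gamma)$ is defined this way, the ``abstract-to-geometric alignment'' you single out as the main obstacle does not exist: there is no analogue of Ryder's injectivity problem here, and hypotheses (1)/(2) play no role in injectivity or in the image lying inside a Kleinian group. (Minor correction: with the paper's normalization the image is $\iota_f$ of the $\Gamma$-conjugacy class of $\gamma$, so it lies in $f\Gamma f^{-1}$ rather than in $\Gamma$ itself; this changes nothing.) So the ``quandle-theoretic work analogous to Ryder's'' that you defer to the two geometric regimes is vacuous, and your diagnosis of where the hypotheses enter is wrong.

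For the ``moreover'' part your short argument is valid and is a genuinely different route from the paper's: the image is a subset of the discrete (hence closed) subgroup $f\Gamma f^{-1}$, so it is a discrete subset of $\PSL(2,\CC)$, and it consists of $\PSL(2,\CC)$-conjugates of $\gamma$, hence lies in $X_\tau$, which is a connected component of $\Conj(\PSL(2,\CC))$ by \cref{PSLmain} (note that ``component'' here means an $\Inn^\Qdle$-orbit, not a topological component, but containment in the conjugacy class is exactly what is needed). In particular your route does not use hypotheses (1)/(2) at all for the statement as written. The paper argues differently: it shows the image of $\psi$ is discrete in the homogeneous space $H_\tau\backslash\PSL(2,\CC)$ by Godement's lemma (\cref{discrete_lemma}), which needs $H_\tau/C_\Gamma(\gamma)$ compact, and hypotheses (1) and (2) are used precisely there, via the classification of elementary discrete groups (\cref{elemPara}, \cref{elemNotPara}, \cref{discretePara}, \cref{discreteNotPara}); the ingredients you mention (the virtually $\ZZ^2$ cusp subgroup in case (1), the virtually cyclic hyperbolic-type centralizer in case (2)) are exactly what that cocompactness argument uses, not an injectivity repair. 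So if you follow the paper's route, the remaining work under (1)/(2) is the compactness of $H_\tau/C_\Gamma(\gamma)$, which you have not carried out; if you commit to your own route, nothing remains, and the hypotheses serve only to make the paper's compactness-based method (which additionally records that $C_\Gamma(\gamma)$ is cocompact in $H_\tau$) applicable.
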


\cref{mainthm} indicates that 
the quandle $Q(\Gamma, \gamma)$ 
serves as an analogous object
in the theory of quandles associated with Kleinian groups.
The key idea of the proof 
involves 
employing a quandle triplet 
associated with the connected component of $\Conj(\PSL(2, \CC))$.

This paper is organized as follows.
In Section \ref{Quandle}, we review quandles.
In Section \ref{Hyp3sp},
we review the hyperbolic 3-space
and Kleinian groups.
In Section \ref{SL2},
we study the structure of the conjugate quandle
$\Conj(\SL(2,\CC))$ (see \cref{SLmain}).
In Section \ref{PSL2},
we study the structure of the conjugate quandle
$\Conj(\PSL(2,\CC))$.
Using the result of Section \ref{SL2} and 
the natural projection $\pi: \SL(2, \CC) \to \PSL(2, \CC)$,
we give the decomposition of $\Conj(\PSL(2,\CC))$ into connected components
and a presentation of connected subquandles 
by a quandle triplet (see \cref{PSLmain}).
In Section \ref{Klein},
we define the quandle $Q(\Gamma, \gamma)$
and provide the proof of the main theorem.

    \section{Quandle}\label{Quandle}

\begin{dfn}[\cite{Joyce-1982-ClassifyingInvariantKnotsKnot,Matveev-1982-DistributiveGroupoidsKnotTheory}]
  A \emph{quandle} is a non-empty set 
  $X$ with a binary operation
  $\lhd: X \times X \to X$ satisfying 
  the following three axioms:
  \begin{enumerate}
    \item for any $x \in X$, $x \lhd x = x$,
    \item for any $y \in X$, the map $s_y: X \to X$ defined by $s_y(x)=x \lhd y$ is a bijection,
    \item for any $x, y, z \in X$, $(x \lhd y) \lhd z = (x \lhd z) \lhd (y \lhd z)$.
  \end{enumerate}
\end{dfn}

The bijection $s_y: X \to X$ is called 
the \emph{point symmetry} at $y \in X$.
A quandle $Y$ is a \emph{subquandle of $X$}
if $Y$ is a subset of $X$ and is a quandle with 
the binary operation that is the restriction of that of $X$.

Let $X$ and $Y$ be quandles. 
A map $f: X \to Y$ is a \emph{quandle homomorphism}
if $f(x \lhd x') = f(x) \lhd f(x')$ for any $x, x' \in X$.
A quandle homomorphism $f: X \to Y$ is a \emph{quandle isomorphism}
if there exists a quandle homomorphism $g: Y \to X$
such that $g \circ f = \id_X$ and $f \circ g = \id_Y$,
and in this case, we say $X$ and $Y$ are \emph{isomorphic}.
A map $f: X \to Y$ is a quandle isomorphism 
if and only if  $f$ is a bijective quandle homomorphism.
Note that the point symmetry $s_y$ for any $y \in X$ 
is a quandle isomorphism.

We denote the set 
of quandle isomorphisms from $X$ to $X$
by $\Aut^\Qdle{(X)}$.
We define a multiplication on $\Aut^\Qdle{(X)}$ 
by $fg:=g \circ f$.
Then $\Aut^\Qdle{(X)}$ is a group acting 
on $X$ from the right.
A quandle is \emph{homogeneous} 
if the action of $\Aut^\Qdle{(X)}$ is transitive.
The subgroup of $\Aut^\Qdle{(X)}$ 
generated by all point symmetries is 
called the \emph{inner automorphism group} 
and is denoted by $\Inn^\Qdle{(X)}$.
We denote the action of $\Inn^\Qdle{(X)}$
by $x \lhd g$ for $x \in X$ and $g \in \Inn^\Qdle{(X)}$.
A quandle is \emph{connected} 
if the action of $\Inn^\Qdle{(X)}$ is transitive.
\emph{A connected component} of $X$ is
an orbit of the action of the inner automorphism group on the quandle.
$\Inn^\Qdle(X)$ is a normal subgroup of $\Aut^\Qdle(X)$.
Note that a connected component is a subquandle.
A connected component is not necessarily a connected quandle.

\begin{ex}\label{ConjQdle}
  Let $G$ be a group. 
  Let $\iota_g: G \to G$ be
  the inner automorphism of $g \in G$
  defined by $\iota_g(x) = gxg^{-1}$.
  We define a binary operation on $G$ 
  by $x \lhd y := \iota_{y^{-1}}(x)$.
  Then $\Conj(G):=(G, \lhd)$ is a quandle 
  called the \emph{conjugate quandle} of $G$.

  Let $G$ be a group
  and $H$ a subset of $G$ such that 
  $\iota_{h^{-1}}(H) \subset H$ for any $h \in H$.
  We define a binary operation on $H$ by $x \lhd y := \iota_{y^{-1}}(x)$.
  Then $\Conj(H) := (H, \lhd)$ is
  a subquandle of $\Conj(G)$.
  The inner automorphism group $\Inn^\Qdle(\Conj(H))$
  is isomorphic to the inner automorphism group 
  $\Inn^\Grp(\langle H \rangle_\Grp)$,
  where $\langle H \rangle_\Grp$ is  the subgroup generated by $H$
  and $\Inn^\Grp(G)$ is the inner automorphism group of a group $G$.
  Indeed, the map $s_h \mapsto \iota_{h}$ can be
  extended to a group isomorphism.
\end{ex}

\begin{ex}\label{QuandleTriplet}
  Let $G$ be a group,
  $\sigma$ a group automorphism of $G$,
  and $H$ a subgroup of 
  the centralizer $C_G(\sigma) = \{g \in G \mid \sigma(g) = g\}$
  of $\sigma$.
  Then the triplet $(G,H,\sigma)$ is called a \emph{quandle triplet}.
  We can define a binary operation on $H \backslash G$ 
  by $Hx \lhd Hy := H\sigma(xy^{-1})y$.
  Then $Q(G, H, \sigma) := (H \backslash G, \lhd)$ 
  is a quandle.
  Note that the group $G$ acts on $Q(G, H, \sigma)$ 
  from the right
  by $Hx \cdot g:= H xg$ as quandle automorphisms,
  and hence $Q(G, H, \sigma)$ is a homogeneous quandle.
\end{ex}

\begin{prop}[{
  \cite[Theorem 7.1.]{Joyce-1982-ClassifyingInvariantKnotsKnot}}]
  \label{homogeneousQuandle}
  Let $X$ be a homogeneous quandle
  and $G$ a normal subgroup of $\Aut^\Qdle{(X)}$
  acting transitively on $X$.
  Take a base point $x_0 \in X$.
  Suppose that $H$ is the stabilizer subgroup 
  at $x_0$ of the action of $G$ on $X$.
  Define
  a group automorphism $\sigma: G \to G$
  by $\sigma(f) = s_{x_0}^{-1} f s_{x_0}$,
  where $s_{x_0}$ is the point symmetry at $x_0$.
  Then the following hold.
  \begin{enumerate}
    \item $(G,H,\sigma)$ is a quandle triplet.
    \item The map $\phi:H \backslash G \to X$
    defined by $\phi(Hg) = x_0 \cdot g$ 
    is a quandle isomorphism.
  \end{enumerate}
\end{prop}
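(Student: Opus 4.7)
The plan is to verify the two claims in sequence, with the central technical fact being that $s_{x_0}$ commutes with any quandle automorphism that fixes $x_0$.

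For claim (1), I first note that $\sigma$ is a well-defined automorphism of $G$: every point symmetry lies in $\Aut^\Qdle(X)$, so the conjugation $f \mapsto s_{x_0}^{-1} f s_{x_0}$ preserves the normal subgroup $G$. To show $H \subset C_G(\sigma)$, take $h \in H$ and check that $s_{x_0} \circ h = h \circ s_{x_0}$ as maps $X \to X$. Using $h(x_0) = x_0$ together with the quandle homomorphism property of $h$, for any $x \in X$ one computes
\[
  s_{x_0}(h(x)) = h(x) \lhd x_0 = h(x) \lhd h(x_0) = h(x \lhd x_0) = h(s_{x_0}(x)).
\]
Translating this commutation back into the group multiplication $fg = g \circ f$ of $\Aut^\Qdle(X)$ yields $\sigma(h) = h$.

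For claim (2), the bijectivity of $\phi$ is the standard orbit-stabilizer statement: transitivity of $G$ on $X$ makes $g \mapsto x_0 \cdot g = g(x_0)$ surjective, and $x_0 \cdot g = x_0 \cdot g'$ is equivalent to $g' g^{-1} \in H$, so the fibers are precisely the left cosets $Hg$. The main step is then to verify that $\phi$ intertwines the operations. Unwinding the definition $Hx \lhd Hy = H\sigma(xy^{-1})y$, this reduces to the identity
\[
  x_0 \cdot \bigl(\sigma(xy^{-1})\, y\bigr) = (x_0 \cdot x) \lhd (x_0 \cdot y)
\]
in $X$. Rewriting the left side as an honest composition via $fg = g \circ f$ and using $s_{x_0}(x_0) = x_0$ (the first quandle axiom), I would simplify it to $(y \circ s_{x_0} \circ y^{-1})(x(x_0))$. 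Since $y$ is a quandle automorphism, conjugation by $y$ sends $s_{x_0}$ to $s_{y(x_0)}$, so this equals $x(x_0) \lhd y(x_0)$, matching the right side.

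I expect the main obstacle to be bookkeeping rather than substance: keeping the quandle-group convention $fg = g \circ f$ consistent while moving between the action of $G$ on $X$, the twisted conjugation formula on $H \backslash G$, and ordinary composition of maps. Once the conventions are pinned down, both claims reduce to the two observations that $s_{x_0}$ fixes $x_0$ and that any quandle automorphism fixing $x_0$ commutes with $s_{x_0}$.
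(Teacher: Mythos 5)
The paper gives no proof of this proposition---it is simply quoted from Joyce's Theorem 7.1---so there is no internal argument to compare against. Your proof is correct and is the standard one: the key identities $y \circ s_{x_0} \circ y^{-1} = s_{y(x_0)}$ (for any quandle automorphism $y$) and $s_{x_0}(x_0)=x_0$ do all the work, the convention $fg = g\circ f$ is handled consistently throughout, and both claims check out.
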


\begin{lem}\label{injective_lemma}
  Let $(G, H, \sigma)$ and $(G', H', \sigma')$ 
  be quandle triplets 
  and 
  $\rho: G' \to G$ a group homomorphism.
  Suppose that $\sigma \circ \rho = \rho \circ \sigma'$
  and $\rho(H') \subset H$. 
  Then the following hold.
  \begin{enumerate}
    \item\label{inj1} 
    The map 
    $\overline{\rho}: Q(G', H', \sigma') 
    \to Q(G, H, \sigma)$
    defined by $\overline{\rho}(H'g) = H\rho(g)$
    is a quandle homomorphism.
    
    \item\label{inj2} 
    If $H' = \rho^{-1}(H)$, 
    then the quandle homomorphism 
    $\overline{\rho}$ is injective.
  \end{enumerate}
\end{lem}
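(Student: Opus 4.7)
My plan is to verify the three things that the statement demands in sequence: well-definedness of $\overline{\rho}$, compatibility with the quandle operation, and injectivity under the extra hypothesis. None of these should require real cleverness; the statement is essentially a bookkeeping exercise about how coset maps interact with the operation $Hx \lhd Hy = H\sigma(xy^{-1})y$.

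For well-definedness in part \ref{inj1}, I would take two representatives $g_1, g_2 \in G'$ with $H'g_1 = H'g_2$, so that $g_1 g_2^{-1} \in H'$. Applying $\rho$ and using $\rho(H') \subset H$, we get $\rho(g_1)\rho(g_2)^{-1} \in H$, hence $H\rho(g_1) = H\rho(g_2)$. For the homomorphism property, I would just compute: using the definition of the quandle operation on each side together with the assumption $\sigma \circ \rho = \rho \circ \sigma'$,
\begin{align*}
\overline{\rho}(H'x \lhd H'y)
&= \overline{\rho}\bigl(H'\sigma'(xy^{-1})y\bigr)
= H\,\rho\bigl(\sigma'(xy^{-1})\bigr)\rho(y) \\
&= H\,\sigma\bigl(\rho(x)\rho(y)^{-1}\bigr)\rho(y)
= H\rho(x) \lhd H\rho(y)
= \overline{\rho}(H'x) \lhd \overline{\rho}(H'y).
\end{align*}

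For part \ref{inj2}, I would assume $\overline{\rho}(H'g_1) = \overline{\rho}(H'g_2)$, which says $\rho(g_1)\rho(g_2)^{-1} = \rho(g_1 g_2^{-1}) \in H$. The hypothesis $H' = \rho^{-1}(H)$ then forces $g_1 g_2^{-1} \in H'$, so $H'g_1 = H'g_2$.

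There is no real obstacle here. The only subtlety worth being careful about is the order of multiplication in the coset operation (ensuring one works with right cosets consistently) and making sure the compatibility $\sigma\circ\rho = \rho\circ\sigma'$ is invoked at exactly the right place; the inclusion $\rho(H') \subset H$ is used only for well-definedness, while the stronger condition $H' = \rho^{-1}(H)$ is precisely what is needed to run the well-definedness argument in reverse to get injectivity.
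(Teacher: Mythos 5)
Your proposal is correct and follows essentially the same route as the paper's proof: the same coset computation for well-definedness, the same chain of equalities invoking $\sigma\circ\rho=\rho\circ\sigma'$ for the homomorphism property, and the same reversal of the well-definedness argument for injectivity. No issues.
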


\begin{proof}
  (\ref{inj1})
  First, we show the well-definedness of 
  the map $\overline{\rho}$.
  Let $a, b \in G$ such that $ab^{-1} \in H'$.
  Then $\rho(a)\rho(b)^{-1} = \rho(ab^{-1}) 
  \in \rho(H') \subset H$.
  Thus, $H'a = H'b$ implies 
  $\overline{\rho}(H'a) = \overline{\rho}(H'b)$,
  and 
  hence $\overline{\rho}$ is well-defined.
  Next, we show 
  $\overline{\rho}$ is a quandle homomorphism.
  For any $a, b \in G'$,
  \begin{align*}
    \overline{\rho}(H'a \lhd H'b)
    = \overline{\rho}(H'\sigma'(ab^{-1})b)
    = H \rho(\sigma'(ab^{-1})b)
    = H \rho (\sigma'(ab^{-1})) \rho(b),
  \end{align*}
  using 
  the definition of the binary operation of
  $Q(G', H', \sigma')$ at the first equation,
  the definition of the map $\overline{\rho}$
  in the second equation
  and 
  that $\rho$ is a group homomorphism
  in the third equation.
  Since $\rho(\sigma'(ab^{-1})) 
  = \sigma(\rho(ab^{-1}))$
  from the assumption, 
  $H \sigma (\rho(ab^{-1})) \rho(b)
  = H \sigma (\rho(a)\rho(b)^{-1}) \rho(b)$.
  Hence,
  \begin{align*}    
    H \sigma (\rho(a)\rho(b)^{-1}) \rho(b)
    = H \rho(a) \lhd H\rho(b)
    = \overline{\rho}(H'a) \lhd \overline{\rho}(H'b),
  \end{align*}
  using 
  the definition of the binary operation of
  $Q(G, H, \sigma)$ in the first equation
  and 
  the definition of the map $\overline{\rho}$
  in the second equation.
  Hence,
  $\overline{\rho}(H'a \lhd H'b) 
  = \overline{\rho}(H'a) \lhd \overline{\rho}(H'b)$.
  Therefore,
  $\overline{\rho}$ is a quandle homomorphism.

  (\ref{inj2})
  Suppose that 
  $\overline{\rho}(H'a) = \overline{\rho}(H'b)$ 
  for $a,b \in G'$.
  Since $\rho(ab^{-1}) \in H$ and $H' = \rho^{-1}(H)$ by assumption,
  we have $ab^{-1} \in \rho^{-1}(H) = H'$.
  Therefore $H'a = H'b$.
\end{proof}


    \section{The hyperbolic 3-space}\label{Hyp3sp}
The hyperbolic 3-space $\HH^3$ is 
a simply connected Riemannian manifold
with constant negative curvature $-1$.
In this paper, we employ the upper half-space model $U^3$,
which is the upper half-space in $\RR^3$ with the hyperbolic metric.
It is regarded as a subset of the set of quaternions:
\begin{equation*}
  U^3 = \{z+tj \mid z \in \CC,\,t>0 \}.
\end{equation*}
The ideal boundary $\partial \HH^3$ of $\HH^3$ is 
identified with the Riemann sphere $\CC P^1$.
We denote the compactification of $\HH^3$ 
by $\overline{\HH}^3 := \HH^3 \cup \partial \HH^3$.
The geodesics of $\HH^3$ are
the semicircles, or the half-lines perpendicular to the complex plane.
For each pair $(x,y)$ of points in $\partial \HH^3$,
there exists a unique geodesic $l$ of $\HH^3$ 
such that the set $\partial_\infty l$ of endpoints is equal to $\{x, y\}$.

For each $A =
\left(\begin{matrix}
  a & b \\ c & d
\end{matrix}\right)
\in \SL(2,\CC)$, 
we can define the orientation-preserving isometry 
$f_A: \HH^3 \to \HH^3$ by $f_A(q) = (aq+b)(cq+d)^{-1}$.
Then the map $\pi: \SL(2,\CC) \to \Isom^+(\HH^3)$ 
defined by $\pi(A) = f_A$ 
is a surjective group homomorphism 
with $\ker{(\pi)} = \{\pm I_2\}$.
Thus the orientation-preserving isometry group of $\HH^3$
is isomorphic to $\PSL(2,\CC) := \SL(2,\CC)/\{\pm I_2\}$.
$\PSL(2,\CC)$ naturally acts on $\partial \HH^3 = \CC P^1$
as linear fractional transformations.

A non-identity orientation-preserving isometry
$f \in \PSL(2,\CC)$
is classified by the trace $\tr(A) \in \CC$ for 
$A \in \SL(2,\CC)$ with $f_A=f$ as follows:
$f$ is \emph{elliptic} (resp. \emph{parabolic} and \emph{loxodromic})
if $\tr(A) \in (-2,2)$
(resp. $\tr(A) \in \{ \pm2 \}$ and $\tr(A) \in \CC \setminus [-2,2]$). 
It is also classified by the fixed points of the action on $\overline{\HH}^3$
as follows:
$f$ is elliptic (resp. parabolic and loxodromic)
if and only if $\Fix{f_A}$ is a geodesic in $\HH^3$
(resp. $\Fix{f_A}$ is a point in $\partial \HH^3$
and $\Fix{f_A}$ is a two-point set in $\partial \HH^3$).
Note that if $f \in \PSL(2, \CC)$
is not parabolic,
then there uniquely exists a geodesic $l$ 
in $\HH^3$ such that 
the restriction of $f$ to 
$\partial_\infty l $ is the identity map.
We denote some 
standard linear fractional transformations by
\begin{align*}
  S_\lambda(z) &= \lambda z & \text{for } \lambda \in \CC^*,\\
  S'_\lambda(z) &= -\frac{\lambda}{z} & \text{for } \lambda \in \CC^*,\\
  T_v(z) &= z + v & \text{for } v \in \CC.\\
\end{align*}
$S_\lambda$ and $T_v$ are the images of 
the Jordan normal forms of matrices in $\SL(2, \CC)$
by the projection.
In particular, 
$f \in \PSL(2, \CC)$
is elliptic (resp. parabolic and loxodromic)
if and only if 
$f$ is conjugate to $S_\lambda$
with $|\lambda|=1$ and $\lambda \neq 1$
(resp. to $T_1$ 
and to $S_\lambda$
with $|\lambda| \neq 1$).

A subgroup $G < \PSL(2, \CC)$ is \emph{elementary}
if there exists $ x \in \overline{\HH}^3$ such that 
the orbit $G \cdot x$ is finite 
(cf. \cite[\S 5.5.]{Ratcliffe-2019-FoundationsHyperbolicManifolds}).
An elementary group $G$
belongs to one of the following types:
\begin{enumerate}
  \item $G$ is said to be of \emph{elliptic type}
  if there exists $x \in \HH^3$ such that 
  the cardinality of the orbit $G \cdot x$ is finite.
  \item $G$ is said to be of \emph{parabolic type}
  if there exists $x \in \partial \HH^3$ such that 
  the cardinality of the orbit $G \cdot x$ is finite
  and for any $y \in \overline{\HH}^3$, 
  the cardinality of the orbit $G \cdot y$ is infinite.
  \item $G$ is said to be of \emph{hyperbolic type}
  if $G$ is neither elliptic type nor parabolic type.
\end{enumerate}
The discrete elementary groups are characterized as follows.
\begin{prop}[{\cite[Theorem 5.5.2.]{Ratcliffe-2019-FoundationsHyperbolicManifolds}}]\label{discElemEllipt}
  A subgroup $\Gamma$ of $\PSL(2, \CC)$ is an elementary discrete subgroup
  of elliptic type
  if and only if $\Gamma$ is finite.
\end{prop}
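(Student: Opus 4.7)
The plan is to prove the two directions separately; only the forward direction requires work. The reverse direction is immediate: a finite subgroup of the Hausdorff topological group $\PSL(2,\CC)$ is automatically discrete, and for any $x \in \HH^3$ one has $|\Gamma \cdot x| \le |\Gamma| < \infty$, so $\Gamma$ is elementary of elliptic type.

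For the forward direction, suppose $\Gamma$ is an elementary discrete subgroup of elliptic type. By definition, there exists $x \in \HH^3$ with $F := \Gamma \cdot x$ finite. By the orbit--stabilizer theorem $|\Gamma| = |\Gamma_x| \cdot |F|$, where $\Gamma_x := \{g \in \Gamma \mid g \cdot x = x\}$, so it suffices to show that $\Gamma_x$ is finite. The stabilizer of $x$ in $\Isom^+(\HH^3) = \PSL(2,\CC)$ is compact: conjugating $x$ to the standard point $j$ in the upper half-space model identifies this stabilizer with $\mathrm{PSU}(2)$, which is compact. Since $\Gamma_x$ is a discrete subgroup of $\PSL(2,\CC)$ (inheriting discreteness from $\Gamma$) contained in this compact group, $\Gamma_x$ must be finite, and hence so is $\Gamma$.

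An equivalent, slightly more geometric, route for the forward direction is first to upgrade the finite orbit $F$ to a genuine fixed point of $\Gamma$ in $\HH^3$ by taking the unique circumcenter of $F$, using the CAT(0) geometry of $\HH^3$; the conclusion then follows in the same way from compactness of the point stabilizer. The main (and only) obstacle in either approach is the compactness of point stabilizers in $\Isom^+(\HH^3)$: this is a standard feature of the hyperbolic model but is the one nontrivial geometric input, after which the orbit--stabilizer count closes the argument in one line.
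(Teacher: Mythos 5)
Your proof is correct. Note that the paper does not prove this statement at all --- it is quoted directly from Ratcliffe (Theorem 5.5.2) --- so the comparison is with the standard argument there, and yours is essentially it: Ratcliffe first upgrades the finite orbit to an honest fixed point in $\HH^3$ (the circumcenter step you mention as the alternative route) and then uses that a discrete subgroup of the compact point stabilizer is finite, while your primary route skips the circumcenter and gets finiteness of $\Gamma$ directly from the orbit--stabilizer count, which is perfectly adequate since only finiteness, not an actual fixed point, is asserted. The one step you leave implicit is why a discrete subgroup contained in the compact stabilizer $\mathrm{PSU}(2)$ must be finite: either invoke that a discrete subgroup of a Hausdorff topological group is closed, hence compact and discrete, or argue directly that an infinite $\Gamma_x$ would have a convergent sequence of distinct elements $g_n \to g$, so $g_n g_{n+1}^{-1} \to \mathrm{id}$, contradicting discreteness; this is a standard one-line fact and not a genuine gap.
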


\begin{prop}[{\cite[Theorem 5.5.5.]{Ratcliffe-2019-FoundationsHyperbolicManifolds}}]\label{discElemPara}
  A subgroup $\Gamma$ of $\PSL(2, \CC)$ is an elementary discrete subgroup
  of parabolic type
  if and only if $\Gamma$ is conjugate in $\PSL(2, \CC)$
  to a discrete subgroup of $\Isom^+(\EE^2)$.
\end{prop}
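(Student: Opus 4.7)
The plan is to prove both directions directly, using the fixed-point classification of elements of $\PSL(2, \CC)$ recalled in Section~\ref{Hyp3sp}.

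For the reverse direction, a discrete subgroup $\Gamma$ conjugate to one inside $\Isom^+(\EE^2)$ fixes $\infty \in \partial \HH^3$, since Euclidean orientation-preserving isometries correspond to linear fractional transformations $z \mapsto az + b$ with $|a| = 1$, so $\infty$ has finite $\Gamma$-orbit. To verify the parabolic-type condition I would check that no finite orbit exists inside $\HH^3$: a non-trivial translation has no fixed point in $\HH^3$, and the only way rotations could produce a finite orbit inside $\HH^3$ is to share a common vertical geodesic, which corresponds to the elliptic-type case excluded by hypothesis.

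For the forward direction, let $\Gamma$ be elementary discrete of parabolic type with finite orbit $\Gamma \cdot x \subset \partial \HH^3$. The decisive step is to show that $\Gamma$ has a \emph{unique} common fixed point on $\partial \HH^3$. I would pass to the normal core $N \triangleleft \Gamma$ of the stabilizer $\Gamma_x$, a finite-index normal subgroup fixing every point of $\Gamma \cdot x$. If the orbit had at least two points $p, q$, then $N$ would preserve the geodesic $\ell$ joining them, and every non-identity element of $N$ would be either elliptic with axis $\ell$ (yielding a finite $N$-orbit, hence a finite $\Gamma$-orbit by the finite-index relation, inside $\HH^3$, contradicting parabolic type) or loxodromic with axis $\ell$ (in which case normality of $N$ forces $\Gamma$ itself to preserve $\{p, q\}$, hence $\ell$, and discreteness again produces a finite $\Gamma$-orbit inside $\HH^3$). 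Therefore $|\Gamma \cdot x| = 1$. Conjugating this unique boundary fixed point to $\infty$ places $\Gamma$ inside the affine group $\{z \mapsto az + b\}$; a loxodromic element, with $|a| \neq 1$, would have a second fixed point $q = b/(1-a) \in \CC$, and conjugating it by elements of $\Gamma$ moving $q$ would either contradict discreteness of $\Gamma$ or force $q$ to be a second common boundary fixed point. Hence no loxodromic element exists, every element satisfies $|a| = 1$, and $\Gamma \subset \Isom^+(\EE^2)$.

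The principal obstacle is the uniqueness of the common boundary fixed point together with the attendant exclusion of loxodromic elements. The argument rests on a careful interplay between the normal-core reduction, the preservation of invariant geodesics, and the discreteness of $\Gamma$ inside $\PSL(2, \CC)$; this is exactly where the full dynamical analysis carried out in \cite{Ratcliffe-2019-FoundationsHyperbolicManifolds} is required.
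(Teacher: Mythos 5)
First, note that the paper itself offers no proof of this proposition: it is quoted verbatim (as Theorem 5.5.5) from Ratcliffe's book, so there is no internal argument to compare against. Judged on its own merits, your sketch follows the standard line of Ratcliffe's proof (finite boundary orbit, normal core of the stabilizer, exclusion of loxodromic elements), but it contains two concrete errors.

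In the reverse direction, you dismiss the possibility that $\Gamma$ is of elliptic type as ``excluded by hypothesis,'' but non-ellipticity is exactly what has to be proved there: the hypothesis is only that $\Gamma$ is discrete and conjugate into $\Isom^+(\EE^2)$. In fact, as the statement is transcribed in the paper (Ratcliffe's original says ``infinite discrete subgroup of $\Isom^+(\EE^{n-1})$''), this direction is false: a finite cyclic group of rotations about a vertical geodesic is a discrete subgroup of $\Isom^+(\EE^2)$ yet is of elliptic type. The missing step is to use infiniteness: a finite orbit in $\HH^3$ would force $\Gamma$ to fix a point of $\HH^3$ (circumcenter argument), hence to lie in a compact group, hence to be finite.

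In the forward direction, the loxodromic branch of your case analysis is wrong where you claim that $\Gamma$ preserving the geodesic $\ell$ joining $p$ and $q$ ``again produces a finite $\Gamma$-orbit inside $\HH^3$ by discreteness.'' A discrete group preserving a geodesic need not have any finite orbit in $\HH^3$; the group generated by $z \mapsto 2z$ preserves the vertical geodesic through $0$ and $\infty$ and every orbit in $\HH^3$ is infinite. The correct contradiction in that branch is that a discrete elementary group containing a loxodromic element with invariant axis is of \emph{hyperbolic} type (cf.\ \cref{discElemHyp}), not that it is of elliptic type. Finally, the concluding exclusion of loxodromic elements from the stabilizer of $\infty$ is only gestured at; the actual mechanism is the explicit convergence $g^{-n} k g^{n} \to g$ of distinct conjugates when $g$ and $k$ are loxodromic affine maps sharing only the fixed point $\infty$, which violates discreteness. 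As written, the proposal would not compile into a complete proof without these repairs.
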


\begin{prop}[{\cite[Theorem 5.5.8.]{Ratcliffe-2019-FoundationsHyperbolicManifolds}}]\label{discElemHyp}
  A subgroup $\Gamma$ of $\PSL(2, \CC)$ is an elementary discrete subgroup
  of hyperbolic type
  if and only if $\Gamma$ contains an infinite cyclic subgroup
  of finite index which is generated by a loxodromic element.
\end{prop}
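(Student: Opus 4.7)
The plan is to prove the two implications separately, using \cref{discElemEllipt,discElemPara} to distinguish the three elementary types.

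For the ``if'' direction, suppose $\Gamma$ contains a finite-index infinite cyclic subgroup $\langle g \rangle$ with $g$ loxodromic. Iterates of $g$ diverge in $\PSL(2,\CC)$, so $\langle g \rangle$ is discrete; a finite-index extension of a discrete subgroup is again discrete, so $\Gamma$ is discrete. Let $\{p,q\} \subset \partial \HH^3$ be the fixed-point set of $g$. For each $\gamma \in \Gamma$ the conjugate $\gamma g \gamma^{-1}$ is loxodromic with fixed points $\{\gamma(p),\gamma(q)\}$, and only finitely many such conjugates occur, so $\Gamma \cdot p \cup \Gamma \cdot q$ is finite and $\Gamma$ is elementary. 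It cannot be of elliptic type by \cref{discElemEllipt} since $\Gamma$ is infinite, nor of parabolic type by \cref{discElemPara} since $\Isom^+(\EE^2)$ contains no loxodromic elements. Hence $\Gamma$ is of hyperbolic type.

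For the ``only if'' direction, assume $\Gamma$ is elementary, discrete, and of hyperbolic type. Non-ellipticity together with \cref{discElemEllipt} forces every finite $\Gamma$-orbit to lie on $\partial \HH^3$; pick such an $x$ and conjugate so that $x = \infty$. Then $\Gamma_x := \mathrm{Stab}_\Gamma(x)$ has finite index in $\Gamma$ and consists of affine maps $z \mapsto az+b$. If every such map had $|a|=1$, then $\Gamma_x$ would embed into $\Isom^+(\EE^2)$ and $\Gamma$ would be of parabolic type by \cref{discElemPara}, a contradiction. So $\Gamma_x$ contains a loxodromic element $g_0$ whose second fixed point I call $q$. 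A ping-pong/Schottky argument excludes two loxodromic elements with disjoint axes from an elementary group, so $\Gamma$ preserves $\{p,q\}$ (with $p=\infty$); hence the pointwise stabilizer $\Gamma_0 := \{\gamma \in \Gamma : \gamma(p)=p,\,\gamma(q)=q\}$ has index at most $2$ in $\Gamma$.

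After a further conjugation putting $\{p,q\} = \{0,\infty\}$, $\Gamma_0$ embeds into the abelian group $\{z \mapsto \mu z\} \cong \CC^*$. The intersection $\Gamma_0 \cap S^1$ is a discrete subgroup of $S^1$, hence finite cyclic. The image of $\Gamma_0$ under $|\cdot| \colon \CC^* \to \RR_{>0}$ is discrete: otherwise a sequence of distinct elements with moduli tending to $1$ would, after passing to a subsequence with convergent arguments, converge in $\PSL(2,\CC)$, contradicting discreteness of $\Gamma_0$. Thus $\Gamma_0 \cong \ZZ \oplus F$ for a finite cyclic $F$ by the structure theorem for finitely generated abelian groups, and the $\ZZ$-summand gives an infinite cyclic finite-index subgroup of $\Gamma_0$ (hence of $\Gamma$) generated by a loxodromic element, since every non-torsion element of $\Gamma_0 \subset \CC^*$ has $|\mu| \neq 1$. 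I expect the main technical step to be the discreteness of $|\Gamma_0|$ in $\RR_{>0}$, which crucially uses compactness of $S^1$ to extract convergent arguments and so derive a contradiction with discreteness of $\Gamma_0$ in $\PSL(2,\CC)$.
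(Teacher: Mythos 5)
This proposition is quoted from Ratcliffe without proof in the paper, so I can only judge your argument on its own. Your ``if'' direction is essentially right (discreteness of a finite-index overgroup of $\langle g\rangle$, finitely many conjugates of $g$ because $\langle g\rangle\subset C_\Gamma(g)$ has finite index --- a line worth adding --- and exclusion of the elliptic and parabolic types via \cref{discElemEllipt} and \cref{discElemPara}), and the final analysis of $\Gamma_0$ inside $\CC^*$ is fine. The gaps are in the middle of the ``only if'' direction, and they are exactly where the real content of the theorem sits, not in the discreteness of $|\Gamma_0|$ that you single out as the main technical step (that part is routine).

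First, the step ``if every element of $\Gamma_x$ had $|a|=1$, then $\Gamma$ would be of parabolic type by \cref{discElemPara}'' is a non sequitur: \cref{discElemPara} applied to the subgroup $\Gamma_x\subset\Isom^+(\EE^2)$ tells you at most that $\Gamma_x$ is of parabolic type, which does not contradict $\Gamma$ being of hyperbolic type when $\Gamma_x\neq\Gamma$, i.e.\ when the finite orbit of $x$ has more than one point. What you actually need here is the assertion that an elementary discrete group of hyperbolic type contains a loxodromic element, and proving it requires extra work: e.g.\ note $\Gamma_x$ is infinite (else $\Gamma$ is finite, hence of elliptic type); an infinite discrete subgroup of $\Isom^+(\EE^2)$ either contains a nontrivial translation (the commutator of two rotations about distinct centers is one) or consists of rotations about a common axis, in which case $\Gamma$ has a finite orbit in $\HH^3$; and in the translation case every finite $\Gamma$-orbit must equal $\{x\}$, so $\Gamma=\Gamma_x$ and only then does \cref{discElemPara} apply to $\Gamma$ itself. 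Second, your proof that $\Gamma$ preserves $\{p,q\}$ only rules out a conjugate $\gamma g_0\gamma^{-1}$ whose fixed-point set is disjoint from $\{p,q\}$; that case is indeed incompatible with elementarity (a finite orbit would have to lie in both fixed-point sets). But the case in which $\gamma(\{p,q\})$ meets $\{p,q\}$ in exactly one point is \emph{not} excluded by elementarity --- the group $\langle g_0,\gamma g_0\gamma^{-1}\rangle$ can perfectly well have the finite orbit $\{p\}$. Excluding it needs discreteness: if $g$ is loxodromic with fixed points $\{p,q\}$, normalized to $\{\infty,0\}$ with $g=S_\lambda$, and $h$ fixes $p$ but not $q$, then $g^{-n}hg^{n}$ is a sequence of distinct elements converging in $\PSL(2,\CC)$, contradicting discreteness of $\Gamma$. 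This is the loxodromic analogue of the fact cited elsewhere in the paper as Theorem 5.5.4 of Ratcliffe, and without it your claim that $\Gamma_0$ has index at most two in $\Gamma$ is unsupported.
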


    \section{The conjugate quandle of $\SL(2, \CC)$}\label{SL2}

In this section, we study the structure 
of the conjugate quandle $\Conj(\SL(2, \CC))$.
For each $t \in \CC$, we define 
the subset $\widetilde{X}_t$ of $\SL(2, \CC)$,
the complex number $\lambda_t$,
the base point $A_t \in \widetilde{X}_t$,
and the subgroup $\widetilde{H}_t$ of $\SL(2, \CC)$ by 
\begin{align*}
  \widetilde{X}_t 
    &= \{A \in \SL(2, \CC) \setminus \{\pm I_2\} \mid \tr A = t\},\\
  \lambda_{t} 
    &=\begin{cases*}
    e^{i \theta}
    & if $t = 2\cos \theta \in (-2, 2)$, $\theta \in (-\pi, \pi)$,
    \\
    \varepsilon
    & if $t = 2\varepsilon \in \{\pm 2\}$, $\varepsilon \in \{\pm 1\}$,
    \\
    \lambda
    & if $t = \lambda + \lambda^{-1} \in \CC \setminus [-2,2]$, 
    $\lambda \in \CC^*, |\lambda|>1$,
  \end{cases*}\\
  A_{t} 
    &=\begin{cases*}
      \begin{pmatrix}
        \lambda_t & 1 \\
        0 & \lambda_t^{-1} 
      \end{pmatrix}
      & if $t \in \{\pm 2\}$,
      \\
      \begin{pmatrix}
        \lambda_t & 0 \\
        0 & \lambda_t^{-1} 
      \end{pmatrix}
      & if $t \in \CC \setminus \{\pm 2\}$,\\
    \end{cases*}\\
  \widetilde{H}_t 
    &= \begin{cases*}
      \left\{
      \begin{pmatrix}
        \varepsilon & \mu \\
        0 & \varepsilon
      \end{pmatrix}
      \in \SL(2, \CC)
    \,\middle|\,
    \varepsilon \in \{\pm 1\},\, \mu \in \CC
    \right\}
    & if $t=\pm 2$,\\
    \left\{
      \begin{pmatrix}
        \mu & 0 \\
        0 & \mu^{-1}
      \end{pmatrix}
      \in \SL(2, \CC)
    \,\middle|\,
    \mu \in \CC^*
    \right\}
    & if $t \in \CC \setminus \{\pm 2\}$.
    \end{cases*}
\end{align*}
The group $\SL(2, \CC)$ acts on $\widetilde{X}_t$ from the right
by $A \cdot g := g^{-1} A g$ for $A \in \widetilde{X}_t$ and $g \in \SL(2,\CC)$.
The action is transitive
since the Jordan normal form of $A \in \widetilde{X}_t$
is the base point $A_t$.
Note that $A \in \widetilde{H}_t$ 
if and only if the matrices $A$ and $A_t$ commute,
and hence
$\widetilde{H}_{t}$ is the stabilizer subgroup
at $A_t$ under the action of $\SL(2, \CC)$.

$\widetilde{X}_{t}$ is a topological space
with the relative topology from $\SL(2, \CC)$.

\begin{lem}\label{X_tIsCpxMfd}
  For each $t \in \CC$,
  the subset $\widetilde{X}_t$ is 
  a 2-dimensional complex submanifold of $\CC^4$.
\end{lem}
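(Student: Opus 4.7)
The plan is to exhibit $\widetilde{X}_t$ as the complement, in a fiber of a holomorphic submersion, of the critical points, and then apply the holomorphic implicit function theorem. Identify $\SL(2,\CC) \subset \CC^4$ with the coordinates $(a,b,c,d)$, and define the holomorphic map
\begin{equation*}
  F: \CC^4 \to \CC^2, \qquad F(a,b,c,d) = (ad-bc,\, a+d).
\end{equation*}
Then $\widetilde{X}_t = F^{-1}(1,t) \setminus \{\pm I_2\}$, so it suffices to show that every point of $\widetilde{X}_t$ is a regular point of $F$; the conclusion then follows from the fact that a holomorphic map $\CC^4 \to \CC^2$ of full rank on a fiber cuts out a 2-dimensional complex submanifold.

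First I would compute the Jacobian of $F$, namely
\begin{equation*}
  J_F(a,b,c,d) = \begin{pmatrix} d & -c & -b & a \\ 1 & 0 & 0 & 1 \end{pmatrix}.
\end{equation*}
A direct inspection of its $2 \times 2$ minors shows that $J_F$ fails to have rank $2$ if and only if $b=c=0$ and $a=d$. Intersecting this degeneracy locus with $F^{-1}(1,t)$ forces $a^2=1$ and $2a=t$, so the only critical points of $F$ lying in the fiber $F^{-1}(1,t)$ are $\pm I_2$, and these occur precisely when $t = \pm 2$.

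Since the definition of $\widetilde{X}_t$ explicitly excludes $\pm I_2$, the map $F$ is a submersion at every point of $\widetilde{X}_t$, for every $t \in \CC$. By the holomorphic implicit function theorem, around each point of $\widetilde{X}_t$ the fiber $F^{-1}(1,t)$ is locally biholomorphic to an open subset of $\CC^2$, so $\widetilde{X}_t$ inherits the structure of a $2$-dimensional complex submanifold of $\CC^4$.

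I do not expect a genuine obstacle here; the only point requiring any care is recognizing that the removal of $\pm I_2$ from the definition of $\widetilde{X}_t$ is exactly what is needed to eliminate the critical points of $F$ in the cases $t = \pm 2$, so that a uniform argument handles all $t \in \CC$ simultaneously.
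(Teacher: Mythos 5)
Your proposal is correct and follows essentially the same route as the paper: both define $F(a,b,c,d)=(ad-bc,\,a+d)$, observe that the Jacobian has rank $2$ away from $\pm I_2$ (your explicit minor computation just fills in the detail the paper leaves implicit), and conclude via the regular value/implicit function theorem that $\widetilde{X}_t = F^{-1}(1,t)\setminus\{\pm I_2\}$ is a $2$-dimensional complex submanifold of $\CC^4$.
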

\begin{proof}
  Consider the function $F:\CC^4 \to \CC^2$ 
  defined by 
  $F(a,b,c,d)=(ad-bc, a+d)$.
  Then $\widetilde{X}_t$ is equal to $F^{-1}(1, t)$.
  The Jacobian matrix $dF_x$ at $x = (a,b,c,d) \in \widetilde{X}_t$ 
  satisfies
  \begin{equation*}
    dF_x = \begin{pmatrix}
      d & -c & -b & a \\
      1 & 0 & 0 & 1
    \end{pmatrix}.
  \end{equation*}
  Since $(a,b,c,d) \neq \pm (1,0,0,1)$,
  the rank of $dF_x$ is equal to $2$.
  Therefore, 
  $\widetilde{X}_t$ is a 2-dimensional 
  complex submanifold of $\CC^4$.
\end{proof}

Define the equivalence relation 
on $\CC^2 \setminus \{(0,0)\}$ by $v \sim -v$
and denote the quotient space 
by $\mathcal{P}$.
For $(\alpha, \beta) \in \CC^2 \setminus \{(0,0)\}$,
define the matrix $A_{(\alpha, \beta)} \in \widetilde{X}_2$ by
\begin{equation*}
  A_{(\alpha, \beta)} := 
  \begin{pmatrix}
    1+\alpha \beta & \alpha^2 \\
    -\beta^2 & 1- \alpha \beta
  \end{pmatrix}.
\end{equation*}

\begin{lem}[cf. \cite{Inoue-2014-QuandleHomologyComplexVolume,Rubinsztein-2007-TOPOLOGICALQUANDLESINVARIANTSLINKS}]\label{coodinate_parabolic}
  The map $F_2: \mathcal{P} \to \widetilde{X}_2$ 
  defined by $F_2([(\alpha, \beta)])=A_{(\alpha, \beta)}$
  is a bijection.
\end{lem}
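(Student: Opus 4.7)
The plan is to verify directly that $F_2$ is well-defined as a map from $\mathcal{P}$ into $\widetilde{X}_2$, and then establish injectivity and surjectivity by comparing matrix entries. First, a direct computation gives $\tr A_{(\alpha,\beta)} = 2$ and $\det A_{(\alpha,\beta)} = (1+\alpha\beta)(1-\alpha\beta) + \alpha^2\beta^2 = 1$. Moreover $A_{(\alpha,\beta)} \neq -I_2$ since the diagonal sum is $2$, and $A_{(\alpha,\beta)} = I_2$ would force $\alpha^2 = \beta^2 = 0$, contradicting $(\alpha,\beta) \neq (0,0)$; hence the image is contained in $\widetilde{X}_2$. Each entry of $A_{(\alpha,\beta)}$ is a homogeneous polynomial of degree two in $(\alpha,\beta)$, so $A_{(-\alpha,-\beta)} = A_{(\alpha,\beta)}$, and thus $F_2$ descends to $\mathcal{P}$.

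For injectivity, I would suppose $A_{(\alpha,\beta)} = A_{(\alpha',\beta')}$ and read off the three relations $\alpha^2 = \alpha'^2$, $\beta^2 = \beta'^2$, and $\alpha\beta = \alpha'\beta'$. The first two force $(\alpha',\beta') = (\pm\alpha, \pm\beta)$, and the third forces the two signs to agree (up to the harmless degeneracy when $\alpha = 0$ or $\beta = 0$, in which case both sign choices coincide already as elements of $\mathcal{P}$). This gives $[(\alpha,\beta)] = [(\alpha',\beta')]$.

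For surjectivity, given $A = \begin{pmatrix} a & b \\ c & d \end{pmatrix} \in \widetilde{X}_2$, I would try to solve the system $\alpha^2 = b$, $\beta^2 = -c$, $\alpha\beta = a-1$. The trace condition $a + d = 2$ rewrites as $1 - d = a - 1$, so $ad - 1 = a(2-a) - 1 = -(a-1)^2$; combined with $\det A = 1$, this yields the compatibility identity $bc = -(a-1)^2$, which is exactly $\alpha^2 \cdot (-\beta^2) = -(\alpha\beta)^2$. When $b \neq 0$, any square root $\alpha$ of $b$ together with $\beta := (a-1)/\alpha$ then satisfy $\beta^2 = (a-1)^2/b = -c$ automatically. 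When $b = 0$, the trace and determinant conditions force $a = d = 1$, and since $A \neq I_2$ we have $c \neq 0$; taking $\alpha = 0$ with $\beta$ any square root of $-c$ furnishes a preimage.

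The proof is essentially a direct parametrization, so I do not anticipate any genuine obstacle. The only slight care needed is in checking that the compatibility equation $bc = -(a-1)^2$ (which encodes simultaneous solvability of the three scalar equations for $\alpha,\beta$) follows from the trace and determinant conditions, and in separately handling the degenerate case $b = 0$; both are resolved by the short calculations indicated above.
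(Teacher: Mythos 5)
Your proof is correct, and your treatment of surjectivity differs from the paper's. The paper first invokes transitivity of the conjugation action on $\widetilde{X}_2$ (every non-central trace-$2$ matrix is conjugate to the Jordan form $A_2$), computes $A^{-1}A_2A = A_{(d,c)}$ for an arbitrary $A = \left(\begin{smallmatrix} a & b \\ c & d\end{smallmatrix}\right) \in \SL(2,\CC)$, and then, for a given $(\alpha,\beta)$, writes down an explicit conjugating matrix realizing $A_{(\alpha,\beta)}$, splitting into the cases $\alpha \neq 0$ and $\alpha = 0$. You instead solve the entry equations $\alpha^2 = b$, $\beta^2 = -c$, $\alpha\beta = a-1$ directly, observing that the trace and determinant conditions yield the compatibility identity $bc = -(a-1)^2$, with the degenerate case $b=0$ handled separately; this avoids any appeal to Jordan normal form and is, if anything, more self-contained. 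You also make explicit two points the paper leaves implicit: that the image of $F_2$ actually lies in $\widetilde{X}_2$ (trace $2$, determinant $1$, and $A_{(\alpha,\beta)} \neq \pm I_2$), and that $F_2$ descends to the quotient $\mathcal{P}$ since the entries are homogeneous of degree two. The injectivity argument (reading off $\alpha^2 = \alpha'^2$, $\beta^2 = \beta'^2$, $\alpha\beta = \alpha'\beta'$ and concluding $(\alpha',\beta') = \pm(\alpha,\beta)$) is the same in both proofs; your remark about the sign ambiguity being harmless when $\alpha = 0$ or $\beta = 0$ is a welcome extra precision. The one thing the paper's route buys that yours does not is the explicit formula $A^{-1}A_2A = A_{(d,c)}$, which records how conjugation interacts with the parametrization and is in the spirit of the coordinates used elsewhere in the literature cited there, but it is not needed for the lemma itself.
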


\begin{proof}
  For any elements $B \in \widetilde{X}_2$,
  there exists $A \in \SL(2, \CC)$ such that 
  $B=A^{-1} A_2 A$.
  Any matrix $A=\begin{pmatrix}
    a & b\\
    c & d
  \end{pmatrix} \in \SL(2, \CC)$
  satisfies
  \begin{equation*}
    A^{-1} A_2 A = \begin{pmatrix}
      1+cd & d^2 \\
      -c^2 & 1-cd
    \end{pmatrix}
    =A_{(d,c)}.
  \end{equation*}
  For $(\alpha, \beta) \in \CC^2 \setminus \{(0,0)\}$,
  put
  \begin{equation*}
    A = \begin{cases*}
      \begin{pmatrix}
        \alpha^{-1} & 0 \\
        \beta & \alpha
      \end{pmatrix} & if $\alpha \neq 0$,\\
      \begin{pmatrix}
        0 & -\beta^{-1} \\
        \beta & \alpha
      \end{pmatrix} & if $\alpha = 0$,
    \end{cases*}
  \end{equation*}
  then we have $A \in \SL(2, \CC)$
  and $A^{-1} A_2 A = A_{(\alpha, \beta)}$,
  and hence $F_2$ is surjective.
  If 
  $A_{(\alpha,\beta)} = A_{(\alpha',\beta')}$,
  then $\alpha^2 = \alpha'^2$, 
  $\beta^2 = \beta'^2$ and 
  $\alpha \beta = \alpha' \beta'$.
  Hence $(\alpha, \beta) = \pm (\alpha', \beta')$,
  and $F_2$ is injective.
\end{proof}

Let $t \in \CC \setminus \{\pm 2\}$ and 
$\mathcal{M}:=\{(\alpha, \beta, \gamma) 
\in \CC^3 \mid \alpha(1-\alpha)+\beta\gamma=0 \}$.
For $(\alpha, \beta,\gamma) \in \mathcal{M}$,
define a matrix 
$A^{\lambda_t}_{(\alpha, \beta,\gamma)} \in \widetilde{X}_t$ by
\begin{equation*}
  A_{(\alpha, \beta,\gamma)}^{\lambda_t} := 
  \begin{pmatrix}
    \alpha\lambda_t + (1-\alpha) \lambda_t{-1} & ({\lambda_t} - {\lambda_t}^{-1})\beta \\
    -({\lambda_t} - {\lambda_t}^{-1})\gamma & (1-\alpha) {\lambda_t} + \alpha {\lambda_t}^{-1}
  \end{pmatrix}.
\end{equation*}

\begin{lem}
  For any $t \in \CC \setminus \{ \pm 2\}$,
  the map $F_t: \mathcal{M} \to \widetilde{X}_t$ 
  defined by $F_t((\alpha, \beta, \gamma)) = A^{\lambda_t}_{(\alpha, \beta,\gamma)}$
  is a bijection.
\end{lem}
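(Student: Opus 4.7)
The plan is to invert the map explicitly. The crucial point is that since $t \neq \pm 2$, we have $\lambda_t \neq \pm 1$, so the scalar $\lambda_t - \lambda_t^{-1}$ is nonzero. This will allow us to solve the three defining equations for $\alpha$, $\beta$, $\gamma$ in closed form given any matrix in $\widetilde{X}_t$. The argument is therefore completely analogous in spirit to the proof of \cref{coodinate_parabolic}, only with the additional bookkeeping of the constraint cutting out $\mathcal{M}$.

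First I would verify that $F_t$ is well-defined, i.e., that $A^{\lambda_t}_{(\alpha,\beta,\gamma)}$ actually belongs to $\widetilde{X}_t$ when $(\alpha,\beta,\gamma) \in \mathcal{M}$. The trace is immediately $\lambda_t + \lambda_t^{-1} = t$ by inspection of the diagonal entries. The determinant expands to $\alpha(1-\alpha)(\lambda_t - \lambda_t^{-1})^2 + 1 + (\lambda_t - \lambda_t^{-1})^2 \beta\gamma$, and the constraint $\alpha(1-\alpha) + \beta\gamma = 0$ collapses this to $1$. Finally, since $t \neq \pm 2$, the resulting matrix cannot equal $\pm I_2$.

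For bijectivity, I would take an arbitrary $B = \begin{pmatrix} a & b \\ c & d \end{pmatrix} \in \widetilde{X}_t$ and define
\[
\alpha := \frac{a - \lambda_t^{-1}}{\lambda_t - \lambda_t^{-1}}, \qquad
\beta := \frac{b}{\lambda_t - \lambda_t^{-1}}, \qquad
\gamma := \frac{-c}{\lambda_t - \lambda_t^{-1}}.
\]
The first and third off-diagonal equations are satisfied by construction, and the fourth entry condition $(1-\alpha)\lambda_t + \alpha \lambda_t^{-1} = d$ follows from $a + d = t$. Uniqueness of $(\alpha,\beta,\gamma)$ recovering $B$ is immediate from the same formulas, since $\lambda_t - \lambda_t^{-1} \neq 0$ makes them forced.

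The only nontrivial point—and where I expect the bulk of the bookkeeping to lie—is checking that the triple just constructed actually lies in $\mathcal{M}$. Here one computes
\[
\alpha(1-\alpha) = \frac{(a - \lambda_t^{-1})(\lambda_t - a)}{(\lambda_t - \lambda_t^{-1})^2} = \frac{a(\lambda_t + \lambda_t^{-1}) - a^2 - 1}{(\lambda_t - \lambda_t^{-1})^2} = \frac{a(t-a) - 1}{(\lambda_t - \lambda_t^{-1})^2},
\]
and using $a + d = t$ together with $\det B = ad - bc = 1$ the numerator becomes $ad - 1 = bc$. Comparing with $\beta\gamma = -bc/(\lambda_t - \lambda_t^{-1})^2$ gives $\alpha(1-\alpha) + \beta\gamma = 0$, completing the proof. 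In effect, the constraint defining $\mathcal{M}$ is nothing but a repackaging of $\det = 1$ once the trace condition is imposed.
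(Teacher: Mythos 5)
Your proof is correct, and it takes a genuinely more direct route than the paper's. The paper argues through the conjugation action: since $t \neq \pm 2$, every $B \in \widetilde{X}_t$ is conjugate in $\SL(2,\CC)$ to the normal form $A_t$, a computation shows $A^{-1}A_t A = A^{\lambda_t}_{(ad,\,bd,\,ac)}$ for $A = \left(\begin{smallmatrix} a & b \\ c & d\end{smallmatrix}\right)$, which gives surjectivity, and well-definedness is handled by exhibiting, with a case analysis on whether $\alpha$, $\beta$, or $\gamma$ vanishes, an explicit conjugator carrying $A_t$ to $A^{\lambda_t}_{(\alpha,\beta,\gamma)}$ for each point of $\mathcal{M}$; injectivity is then read off the entries. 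You never invoke conjugacy at all: you check directly that the trace of $A^{\lambda_t}_{(\alpha,\beta,\gamma)}$ is $t$ and that the constraint $\alpha(1-\alpha)+\beta\gamma=0$ is exactly the determinant-one condition, and you invert the map by solving the entry equations, which is forced because $\lambda_t-\lambda_t^{-1}\neq 0$; membership of the recovered triple in $\mathcal{M}$ is again just $\det B = 1$, via $(a-\lambda_t^{-1})(\lambda_t-a) = a(t-a)-1 = ad-1 = bc$, and these computations all check out. What each approach buys: yours is self-contained linear algebra with no case analysis and makes injectivity and well-definedness completely transparent, while the paper's version dovetails with the surrounding development, where the transitive conjugation action of $\SL(2,\CC)$ on $\widetilde{X}_t$ and the stabilizer $\widetilde{H}_t$ are the objects actually used later, so its proof doubles as a record of explicit conjugators to the base point $A_t$.
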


\begin{proof}
  For any element $B \in \widetilde{X}_t$,
  there exists $A \in \SL(2, \CC)$ such that 
  $B=A^{-1} A_t A$.
  Any matrix $A=\begin{pmatrix}
    a & b\\
    c & d
  \end{pmatrix} \in \SL(2, \CC)$
  satisfies
  \begin{equation*}
    A^{-1} A_t A = \begin{pmatrix}
      \lambda_t ad + \lambda_t^{-1} (1- ad) & (\lambda_t - \lambda_t^{-1}) bd \\
      -(\lambda_t - \lambda_t^{-1}) ac & \lambda_t^{-1}ad + \lambda_t (1 - ad)
    \end{pmatrix}
    =A^{\lambda_t}_{(ad, bd, ac)}.
  \end{equation*}
  For $(\alpha, \beta, \gamma) \in \mathcal{M}$,
  let
  \begin{equation*}
    A = \begin{cases*}
      \begin{pmatrix}
        \alpha & \beta \\
        \frac{\gamma}{\alpha} & 1
      \end{pmatrix} 
      & if $\alpha \neq 0$,\\
      \begin{pmatrix}
        1 & -\gamma^{-1} \\
        \gamma & 0
      \end{pmatrix}
      & if $\alpha = 0$ and $\beta=0$,\\
      \begin{pmatrix}
        0 &  \beta \\
        -\beta^{-1} & 1
      \end{pmatrix}
      & if $\alpha = 0$ and $\gamma=0$,
    \end{cases*}
  \end{equation*}
  then we have $A \in \SL(2, \CC)$
  and $A^{-1} A_t A = A^{\lambda_t}_{(\alpha, \beta,\gamma)}$,
  and hence $F_t$ is surjective.
  We can see that
  $A^{\lambda_t}_{(\alpha,\beta,\gamma)} 
  = A^{\lambda_t}_{(\alpha',\beta', \gamma')}$
  implies $(\alpha, \beta, \gamma) = (\alpha', \beta', \gamma')$.
  Therefore, $F_t$ is injective.
\end{proof}

\begin{prop}\label{homogSpSL}
  The map $F: \widetilde{H}_t \backslash \SL(2, \CC) \to \widetilde{X}_t$
  defined by $F(\widetilde{H}_t g) = A_t \cdot g$ for $g \in \SL(2, \CC)$
  is a homeomorphism.
\end{prop}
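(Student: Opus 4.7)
The strategy is to handle the three standard requirements for a homeomorphism: well-definedness and bijectivity, continuity of $F$, and continuity of $F^{-1}$.

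The first two are essentially formal. The paper has already observed that $\widetilde{H}_t$ is exactly the stabilizer of $A_t$ under $A \cdot g = g^{-1} A g$ and that this action is transitive on $\widetilde{X}_t$ via the Jordan normal form. From this, $\widetilde{H}_t g_1 = \widetilde{H}_t g_2$ is equivalent to $g_1^{-1} A_t g_1 = g_2^{-1} A_t g_2$, giving both well-definedness and injectivity, while transitivity gives surjectivity. Continuity of $F$ then follows from the universal property of the quotient topology applied to the orbit map $\phi: \SL(2,\CC) \to \widetilde{X}_t$, $\phi(g) = g^{-1} A_t g$, whose entries are polynomial in the entries of $g$ and $g^{-1}$.

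The main obstacle is continuity of $F^{-1}$, which is equivalent to $\phi$ being an open map. My plan is to exploit the coordinate parametrizations from the preceding lemmas: the proofs of \cref{coodinate_parabolic} and its counterpart for $t \neq \pm 2$ supply explicit conjugating matrices, and although those matrices are presented via piecewise formulas, a small reorganization turns them into continuous sections of $\phi$ over an open cover of the coordinate space. For $t = 2$, for instance, a direct check shows that $\begin{pmatrix} 0 & -\beta^{-1} \\ \beta & \alpha \end{pmatrix}$ conjugates $A_2$ to $A_{(\alpha, \beta)}$ for every $\beta \neq 0$ (not merely when $\alpha = 0$), so together with $\begin{pmatrix} \alpha^{-1} & 0 \\ \beta & \alpha \end{pmatrix}$ on $\{\alpha \neq 0\}$ one obtains two continuous sections over the open cover $\{\alpha \neq 0\} \cup \{\beta \neq 0\}$ of $\CC^2 \setminus \{(0,0)\}$. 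These descend to $\mathcal{P}$ because $-I_2 \in \widetilde{H}_2$ absorbs the sign identification, and composing with the bijection $F_2 : \mathcal{P} \to \widetilde{X}_2$ yields continuous local inverses of $F$ that glue to a continuous global inverse; the case $t \neq \pm 2$ is analogous using $\mathcal{M}$ and $F_t$, with an open cover constructed from the three piecewise formulas in that lemma.

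A conceptually cleaner alternative would be to recognize $\phi$ as a holomorphic submersion between complex manifolds: by \cref{X_tIsCpxMfd} together with a dimension count, the differential $d\phi_{I_2}$, which sends a traceless matrix $X$ to $[A_t, X]$, has kernel equal to the Lie-algebra centralizer of $A_t$ (complex dimension $1$ since $A_t \neq \pm I_2$), hence rank $2 = \dim_{\CC} \widetilde{X}_t$; right-equivariance promotes this to a submersion everywhere, and a holomorphic submersion is automatically open.
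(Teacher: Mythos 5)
Your proposal is correct in substance but takes a genuinely different route from the paper. The paper disposes of \cref{homogSpSL} in one line by applying the general homogeneous-space theorem quoted as \cref{homogHomeo}: $\SL(2,\CC)$ is $\sigma$-compact, locally compact and Hausdorff, $\widetilde{X}_t$ is locally compact Hausdorff (a submanifold of $\CC^4$ by \cref{X_tIsCpxMfd}), and the action is continuous and transitive with stabilizer $\widetilde{H}_t$; the same citation is reused for \cref{homogSpPSL}. You instead prove openness of the orbit map $\phi(g)=g^{-1}A_tg$ by hand. Your second (submersion) argument is the cleaner of your two variants and is essentially complete: $d\phi_{I_2}(X)=[A_t,X]$ has one-dimensional kernel since $A_t$ is non-central, so the rank is $2=\dim_\CC\widetilde{X}_t$, the equivariance $\phi(hg)=g^{-1}\phi(h)g$ spreads surjectivity of the differential over all of $\SL(2,\CC)$, and a holomorphic submersion is open; this buys a self-contained, geometric proof that avoids the Baire-category input hidden in \cref{homogHomeo}, at the cost of invoking the manifold structure, which the cited lemma does not need. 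Your first (explicit sections) variant needs more repair than ``a small reorganization'': as phrased it composes with $F_2^{-1}$ (resp.\ $F_t^{-1}$), but \cref{coodinate_parabolic} and its loxodromic analogue only establish bijectivity, so you would either have to show these parametrizations are homeomorphisms (e.g.\ via properness) or build the local sections directly over open subsets of $\widetilde{X}_t$ using local branches of square roots of matrix entries; moreover, for $t\neq\pm2$ the second and third piecewise formulas are defined on the non-open sets $\{\alpha=0,\beta=0\}$ and $\{\alpha=0,\gamma=0\}$, and even after extending them the open sets $\{\alpha\neq0\},\{\beta\neq0\},\{\gamma\neq0\}$ miss the point $(0,0,0)\in\mathcal{M}$, so an additional section (for instance over $\{\alpha\neq1\}$) is required. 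Since the submersion argument stands on its own, the proposal as a whole succeeds, and it is a legitimate alternative to the paper's appeal to \cref{homogHomeo}.
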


For the proof of \cref{homogSpSL},
we use the following fact about the topology of 
a homogeneous space of a topological group.

\begin{lem}[{\cite[Section 1.2, Theorem 1]{Godement-2017-IntroductionTheoryLieGroups}}]\label{homogHomeo}
  Let $G$ be a 
  topological group
  which is $\sigma$-compact, locally compact
  and Hausdorff.
  Let $X$ be locally compact Hausdorff space
  on which $G$ acts continuously and transitively.
  Let $H$ be the stabilizer subgroup 
  at a point $a \in X$.
  Then the map $f: G/H \to X$ defined 
  by $f(gH) = g \cdot a$ is a homeomorphism.
\end{lem}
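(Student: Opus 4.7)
The plan is to deduce the proposition directly from \cref{homogHomeo} applied to the conjugation action of $\SL(2,\CC)$ on $\widetilde{X}_t$. All the key ingredients have already been observed in the preceding text, so the proof reduces to verifying the hypotheses of that lemma.

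First I would verify the hypotheses on the acting group: $\SL(2,\CC)$ is a closed subset of $\CC^4$ (being the zero set of the polynomial $ad-bc-1$), so it is a Hausdorff, $\sigma$-compact, locally compact topological group. Next I would verify the hypotheses on the space being acted upon: by \cref{X_tIsCpxMfd}, $\widetilde{X}_t$ is a $2$-dimensional complex submanifold of $\CC^4$, hence locally compact and Hausdorff. The conjugation action $A \cdot g = g^{-1} A g$ is continuous because matrix multiplication and inversion are continuous, and it restricts to $\widetilde{X}_t$ since trace is conjugation-invariant.

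Next I would record the two facts already pointed out in the text just above the proposition: the action is transitive on $\widetilde{X}_t$, because every $A \in \widetilde{X}_t$ is conjugate to its Jordan normal form $A_t$; and the stabilizer of $A_t$ is exactly $\widetilde{H}_t$, because $g \in \SL(2,\CC)$ satisfies $g^{-1} A_t g = A_t$ if and only if $g$ commutes with $A_t$, and one can check by a direct computation (which can be left as routine) that the centralizer of $A_t$ in $\SL(2,\CC)$ coincides with the explicit subgroup $\widetilde{H}_t$ given in the two cases $t = \pm 2$ and $t \in \CC \setminus \{\pm 2\}$.

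With the hypotheses in place, \cref{homogHomeo} immediately yields that the orbit map $\widetilde{H}_t \backslash \SL(2,\CC) \to \widetilde{X}_t$, $\widetilde{H}_t g \mapsto A_t \cdot g$, is a homeomorphism, which is exactly $F$. The main obstacle, such as it is, is the verification that the centralizer of $A_t$ in $\SL(2,\CC)$ equals $\widetilde{H}_t$ in both the diagonalizable and unipotent cases; but this is a short linear-algebra computation rather than a conceptual difficulty.
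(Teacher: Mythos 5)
Your proposal does not prove the statement you were asked to prove. The statement is \cref{homogHomeo} itself: the general fact that for a $\sigma$-compact, locally compact Hausdorff topological group $G$ acting continuously and transitively on a locally compact Hausdorff space $X$, the orbit map $G/H \to X$ is a homeomorphism. What you have written is instead a (correct, and essentially identical to the paper's) proof of \cref{homogSpSL}, the application of that lemma to the conjugation action of $\SL(2,\CC)$ on $\widetilde{X}_t$. Invoking \cref{homogHomeo} to establish \cref{homogHomeo} is circular: verifying its hypotheses in one special case says nothing about why the conclusion holds. In the paper this lemma carries no proof at all --- it is quoted from Godement --- so if a proof is to be supplied, it must be the general topological argument, not an instance of it.

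For the record, here is where the actual content lies. The map $f$ is well-defined and bijective because $H$ is exactly the stabilizer and the action is transitive, and it is continuous because the action is continuous and the quotient map $G \to G/H$ is open. The only nontrivial point is that $f$ is open, equivalently that the orbit map $g \mapsto g \cdot a$ is open, and this is where \emph{every} hypothesis you did not use gets spent. One shows that for any neighborhood $U$ of $e$ in $G$, the set $U \cdot a$ is a neighborhood of $a$ in $X$: choose a compact symmetric neighborhood $V$ of $e$ with $VV \subset U$; by $\sigma$-compactness write $G = \bigcup_n g_n V$, so that $X = \bigcup_n g_n V \cdot a$ is a countable union of compact, hence closed, subsets; since $X$ is locally compact Hausdorff it is a Baire space, so some $g_n V \cdot a$, and therefore $V \cdot a$, has nonempty interior, say containing $v \cdot a$ with $v \in V$; then $a \in v^{-1}\operatorname{int}(V \cdot a) \subset VV \cdot a \subset U \cdot a$, which exhibits $U \cdot a$ as a neighborhood of $a$. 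Openness at $e$ plus homogeneity gives openness everywhere. None of this Baire-category mechanism appears in your proposal, and without it the statement is simply false in general (transitive continuous bijections from coset spaces need not be homeomorphisms absent such countability hypotheses).
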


\begin{proof}[Proof of \cref{homogSpSL}]
  Apply \cref{homogHomeo} to 
  the transitive action of $\SL(2,\CC)$ on $\widetilde{X}_t$.
\end{proof}

Since conjugation preserves the trace, 
we can define a quandle structure on $\widetilde{X}_t$
as the subquandle of $\Conj(\SL(2,\CC))$
and denote the quandle by $\Conj(\widetilde{X}_t)$.
Note that the map $\phi: \SL(2, \CC) \to \SL(2, \CC)$ defined by
$\phi(A) = -A$ induces a quandle isomorphism
$\Conj(\widetilde{X}_t) \to \Conj(\widetilde{X}_{-t})$ for each $t \in  \CC$.

\begin{prop}\label{connectedSL}
  Let $t \in \CC$.
  \begin{enumerate}
    \item\label{connSL1} 
    The subset $\widetilde{X}_{t}$ generates $\SL(2,\CC)$ as a group.
    \item\label{connSL2} 
    The inner automorphism group $\Inn^\Qdle(\Conj(\widetilde{X}_{t}))$
    is isomorphic to $\PSL(2, \CC)$.
    \item\label{connSL3} 
    The quandle $\Conj(\widetilde{X}_t)$ is a connected quandle.
  \end{enumerate}
\end{prop}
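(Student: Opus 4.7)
The plan is to chain the three statements so that (\ref{connSL1}) feeds directly into (\ref{connSL2}) and (\ref{connSL3}). For (\ref{connSL1}), the key observation is that the conjugation action of $\SL(2,\CC)$ on $\widetilde{X}_t$ is transitive (already noted just above \cref{X_tIsCpxMfd} via the Jordan normal form $A_t$), so $\widetilde{X}_t$ is a single conjugacy class of $\SL(2,\CC)$. Consequently the subgroup $\langle \widetilde{X}_t \rangle$ that it generates is a normal subgroup of $\SL(2,\CC)$. I would then invoke the classical fact that $\PSL(2,\CC)$ is simple, so the only normal subgroups of $\SL(2,\CC)$ are $\{I_2\}$, $\{\pm I_2\}$, and $\SL(2,\CC)$ itself. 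Since $A_t \in \widetilde{X}_t$ and $A_t \neq \pm I_2$ by the very definition of $\widetilde{X}_t$, the subgroup $\langle \widetilde{X}_t \rangle$ strictly contains $\{\pm I_2\}$ and must therefore equal $\SL(2,\CC)$.

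For (\ref{connSL2}), I would apply the identification recorded in \cref{ConjQdle}: for a subset $H \subset G$ closed under conjugation by its own elements, $\Inn^\Qdle(\Conj(H)) \cong \Inn^\Grp(\langle H \rangle_\Grp)$ via $s_h \mapsto \iota_h$. Taking $H = \widetilde{X}_t$ (which is trace-invariant, hence closed under $\iota_{h^{-1}}$ for every $h \in \widetilde{X}_t$) and feeding in (\ref{connSL1}) yields $\Inn^\Qdle(\Conj(\widetilde{X}_t)) \cong \Inn^\Grp(\SL(2,\CC)) \cong \SL(2,\CC)/\{\pm I_2\} = \PSL(2,\CC)$. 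For (\ref{connSL3}), connectedness is precisely the statement that $\Inn^\Qdle(\Conj(\widetilde{X}_t))$ acts transitively on $\widetilde{X}_t$; under the isomorphism from (\ref{connSL2}), this right action agrees with the conjugation action of $\SL(2,\CC)$ (descended through $\PSL(2,\CC)$), and its transitivity is exactly the Jordan normal form input that was already used in (\ref{connSL1}).

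Since every step reduces either to the simplicity of $\PSL(2,\CC)$ or to a fact already recorded earlier in Section \ref{SL2}, I do not foresee a substantive obstacle. The only point requiring genuine care is the assertion that a normal subgroup of $\SL(2,\CC)$ strictly containing the center must be all of $\SL(2,\CC)$; this is a standard consequence of the simplicity of $\PSL(2,\CC)$, but I would include a one-line justification in the write-up rather than treating it as a black box.
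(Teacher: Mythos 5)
Your proposal is correct, but part (\ref{connSL1}) is proved by a genuinely different route from the paper. The paper's proof is entirely explicit: it exhibits the unipotent matrices $U_z$ and $L_z$ as products of two elements of $\widetilde{X}_{-2}$ (resp.\ of $\widetilde{X}_t$ for $t \neq \pm 2$) and then writes an arbitrary element of $\SL(2,\CC)$ as an explicit product of such matrices, so no structure theory is invoked. You instead observe that $\widetilde{X}_t$ is a single conjugacy class, hence $\langle \widetilde{X}_t\rangle$ is normal, and appeal to the classification of normal subgroups of $\SL(2,\CC)$. That works, but note the point you yourself flag is a real one: simplicity of $\PSL(2,\CC)$ alone only tells you that the image of $\langle \widetilde{X}_t\rangle$ in $\PSL(2,\CC)$ is trivial or everything; to exclude a proper normal subgroup surjecting onto $\PSL(2,\CC)$ (i.e.\ an index-two normal complement to the center) you also need, e.g., that $\SL(2,\CC)$ is perfect, or the observation that $-I_2$ is a square in $\SL(2,\CC)$ while the generator of a direct $\ZZ/2$ factor could not be; also, strictly speaking your phrase ``strictly contains $\{\pm I_2\}$'' should be ``is not contained in $\{\pm I_2\}$''. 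With that one-line justification included, your argument is complete, shorter, and more conceptual (it applies verbatim to any non-central conjugacy class), whereas the paper's computation is elementary, self-contained, and avoids citing the simplicity theorem. Your treatments of (\ref{connSL2}) and (\ref{connSL3}) coincide with the paper's: (\ref{connSL2}) is the identification $\Inn^\Qdle(\Conj(H)) \cong \Inn^\Grp(\langle H\rangle_\Grp)$ of \cref{ConjQdle} combined with (\ref{connSL1}), and (\ref{connSL3}) follows because the quandle action of $\Inn^\Qdle(\Conj(\widetilde{X}_t))$ is the conjugation action of $\SL(2,\CC)$ transported along the natural surjection, which is transitive by the Jordan normal form.
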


\begin{proof}
  (\ref{connSL1})
  For $z \in \CC$,
  we define two matrices
  $U_z=
    \begin{pmatrix}
      1 & z\\
      0 & 1
    \end{pmatrix}$,
  $L_z=
  \begin{pmatrix}
    1 & 0\\
    z & 1
  \end{pmatrix}$
  in $\widetilde{X}_2$.
  The matrices $U_z$ and $L_z$ 
  are contained in the subgroup
  $\langle \widetilde{X}_{-2} \rangle_\Grp$
  of $\SL(2, \CC)$
  generated by $\widetilde{X}_{-2}$
  since the following equations hold:
  \begin{align*}
    U_z &= \left(
      \begin{matrix}
        -1 & -z+1 \\
        0 & -1
      \end{matrix}
    \right)
    \left(
      \begin{matrix}
        -1 & -1 \\
        0 & -1
      \end{matrix}
    \right),\\
    L_z &= 
    \left(
      \begin{matrix}
        -1 & -1 \\
        0 & -1
      \end{matrix}
    \right)
    \left(
      \begin{matrix}
        -1 & 0 \\
        -z+1 & -1
      \end{matrix}
    \right).
  \end{align*}
  For any $t \in \CC \setminus \{\pm 2\}$,
  we have
  \begin{align*}
    U_z &= \left(
      \begin{matrix}
        \lambda_t & \lambda_t^{-1}z \\
        0 & \lambda_t^{-1}
      \end{matrix}
    \right)
    \left(
      \begin{matrix}
        \lambda_t^{-1} & 0 \\
        0 & \lambda_t
      \end{matrix}
    \right),\\
    L_z &= 
    \left(
      \begin{matrix}
        \lambda_t^{-1} & 0 \\
        0 & \lambda_t
      \end{matrix}
    \right)
    \left(
      \begin{matrix}
        \lambda_t & 0 \\
        \lambda_t^{-1}z & \lambda_t^{-1}
      \end{matrix}
    \right),
  \end{align*}
  and hence 
  the matrices $U_z$ and $L_z$ 
  are contained in the subgroup
  $\langle \widetilde{X}_{t} \rangle_\Grp$
  of $\SL(2, \CC)$
  generated by $\widetilde{X}_{t}$.
  The set $\{U_z, L_z \mid z \in \CC\}$ 
  generates $\SL(2, \CC)$
  since
  any $A = \begin{pmatrix}
    a & b \\
    c & d
  \end{pmatrix} \in \SL(2, \CC)$ satisfies
  \begin{equation*}
    A= \begin{cases*}
      L_{\frac{d-1}{b}}\, U_b \, L_{\frac{a-1}{b}} & if $b \neq 0$, \\
      U_{\frac{a-1}{c}}\, L_c\, U_{\frac{1-a}{ac}} & if $b = 0$, $c \neq 0$,\\
      U_{-a(a-1)}\, L_{-a^{-1}}\, U_{a-1}\, L_1 & if $b=c=0$.
    \end{cases*}
  \end{equation*}
  Therefore
  $\widetilde{X}_t$ generates $\SL(2, \CC)$
  for any $t \in \CC$.

  (\ref{connSL2})
  Since $\langle \widetilde{X}_t \rangle_{\Grp}$ is equal to $\SL(2, \CC)$
  by (\ref{connSL1}), 
  the inner automorphism group $\Inn^\Qdle(\Conj(\widetilde{X}_t))$
  is isomorphic to the inner automorphism group
  $\Inn^\Grp(\langle \widetilde{X}_t \rangle_{\Grp}) = \Inn^\Grp(\SL(2,\CC))$.
  Then we have 
  \begin{equation*}
    \Inn^\Grp(\SL(2,\CC))=\SL(2,\CC) / Z(\SL(2,\CC)) \cong \PSL(2,\CC),
  \end{equation*}
  where $Z(G)$ is the center of $G$.
  Therefore, $\Inn^\Qdle(\Conj(\widetilde{X}_t))$ is isomorphic to $\PSL(2,\CC)$.

  (\ref{connSL3})
  The surjective group homomorphism
  $s: \SL(2, \CC) \to \Inn^\Qdle{(\Conj(\widetilde{X}_t))}$
  satisfies $A \cdot g = A \lhd s(g)$
  for $A \in \Conj(\widetilde{X}_t)$ and $g \in \SL(2, \CC)$.
  Since the action of $\SL(2, \CC)$ is transitive,
  $\Inn^\Qdle{(\Conj(\widetilde{X}_t))}$ acts transitively,
  which completes proof.
\end{proof}

We define an automorphism 
$\widetilde{\sigma}_t \in \Aut(\SL(2, \CC))$ by the inner automorphism
$\widetilde{\sigma}_{t} = \iota_{A_t^{-1}}$ of $A_t^{-1}$.
The restriction of the automorphism $\widetilde{\sigma}_t$ 
to the quandle $\widetilde{X}_t$ 
is equal to the point symmetry of $\widetilde{X}_t$
at the base point $A_t$.

\begin{prop}\label{homogPresXt}
  The quandle $\Conj(\widetilde{X}_t)$ is isomorphic to 
  $Q(\SL(2, \CC), \widetilde{H}_t, \widetilde{\sigma}_t)$.
\end{prop}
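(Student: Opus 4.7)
The plan is to exhibit the bijection $F: \widetilde{H}_t \backslash \SL(2,\CC) \to \widetilde{X}_t$, $\widetilde{H}_t g \mapsto g^{-1} A_t g$, which is already known to be a bijection by \cref{homogSpSL}, and then verify directly that it intertwines the two quandle structures. This is essentially an instance of \cref{homogeneousQuandle} applied to the conjugation action of $\SL(2,\CC)$ on $\widetilde{X}_t$ with base point $A_t$, stabilizer $\widetilde{H}_t$, and automorphism $\widetilde{\sigma}_t = \iota_{A_t^{-1}}$ playing the role of $\sigma$. A small subtlety is that $\SL(2,\CC)$ does not literally embed in $\Aut^\Qdle(\Conj(\widetilde{X}_t))$: it only surjects onto $\Inn^\Qdle \cong \PSL(2,\CC)$ by \cref{connectedSL}(\ref{connSL2}). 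For this reason, rather than formally adapting \cref{homogeneousQuandle} to an acting group that maps only surjectively into $\Aut^\Qdle$, the cleanest route is a direct calculation using the explicit $F$.

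The verification reduces to unfolding both sides of $F(\widetilde{H}_t g \lhd \widetilde{H}_t h) = F(\widetilde{H}_t g) \lhd F(\widetilde{H}_t h)$. The left-hand side, by the rule from \cref{QuandleTriplet} together with $\widetilde{\sigma}_t(x) = A_t^{-1} x A_t$, becomes the conjugate of $A_t$ by $A_t^{-1} g h^{-1} A_t h$; the right-hand side, by the conjugate-quandle convention $B \lhd C = C^{-1} B C$, becomes the conjugate of $g^{-1} A_t g$ by $h^{-1} A_t h$. A single cancellation of $A_t A_t^{-1}$ in the middle of the first expression shows both sides equal $h^{-1} A_t^{-1} h g^{-1} A_t g h^{-1} A_t h$, so $F$ is a quandle homomorphism and hence a quandle isomorphism.

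The only genuine point of care is bookkeeping the conjugation conventions so that the formulas $\widetilde{H}_t g \lhd \widetilde{H}_t h = \widetilde{H}_t \widetilde{\sigma}_t(gh^{-1}) h$ and $B \lhd C = C^{-1} B C$ align under $F$; there is no deeper obstacle. As a sanity check, one may instead apply \cref{homogeneousQuandle} with $G = \PSL(2,\CC)$ to obtain $\Conj(\widetilde{X}_t) \cong Q(\PSL(2,\CC), \widetilde{H}_t/\{\pm I_2\}, \overline{\widetilde{\sigma}_t})$ and then lift to the $\SL(2,\CC)$-level triplet via \cref{injective_lemma} applied to $\pi : \SL(2,\CC) \to \PSL(2,\CC)$, using that $\pm I_2 \in \widetilde{H}_t$ so $\widetilde{H}_t = \pi^{-1}(\widetilde{H}_t/\{\pm I_2\})$; but the direct approach above is shorter.
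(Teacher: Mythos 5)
Your proof is correct, and the algebra checks out: with $\widetilde{\sigma}_t=\iota_{A_t^{-1}}$ both sides of $F(\widetilde{H}_tg\lhd\widetilde{H}_th)=F(\widetilde{H}_tg)\lhd F(\widetilde{H}_th)$ indeed reduce to $h^{-1}A_t^{-1}hg^{-1}A_tgh^{-1}A_th$, and well-definedness and bijectivity of $F$ are already covered by the fact that $\widetilde{H}_t$ is exactly the stabilizer of $A_t$ (cf.\ \cref{homogSpSL}). The paper's own proof is a one-line invocation of \cref{homogeneousQuandle} applied to the conjugation action of $\SL(2,\CC)$ on $\Conj(\widetilde{X}_t)$, whereas you deliberately avoid that citation on the grounds that $\SL(2,\CC)$ maps onto $\Inn^\Qdle(\Conj(\widetilde{X}_t))\cong\PSL(2,\CC)$ with kernel $\{\pm I_2\}$ (by \cref{connectedSL}), so it is not literally a normal subgroup of $\Aut^\Qdle$ as the hypothesis of \cref{homogeneousQuandle} is phrased. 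That observation is accurate, and your two fixes are both valid: the direct verification above, or applying \cref{homogeneousQuandle} to $\PSL(2,\CC)$ and lifting through $\pi$ via \cref{injective_lemma}, which works precisely because $\pm I_2\in\widetilde{H}_t$, so $\widetilde{H}_t=\pi^{-1}(\pi(\widetilde{H}_t))$ and the induced map is a bijective quandle homomorphism. What the paper's route buys is brevity, at the cost of implicitly extending \cref{homogeneousQuandle} to a group acting through a homomorphism whose kernel lies in the stabilizer (the standard form of Joyce's theorem does cover this); what your route buys is a self-contained argument whose only inputs are the coset-quandle formula from \cref{QuandleTriplet}, the conjugate-quandle convention, and \cref{homogSpSL}.
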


\begin{proof}
  Apply \cref{homogeneousQuandle} to
  the action of $\SL(2,\CC)$ on $\Conj(\widetilde{X}_t)$.
\end{proof}

\begin{thm}\label{SLmain}
  The conjugate quandle $\Conj(\SL(2, \CC))$ 
  of $\SL(2,\CC)$ 
  is decomposed
  into connected components as follows:
  \begin{equation*}
    \Conj(\SL(2, \CC)) = \{I_2\} \sqcup \{-I_2\} \sqcup 
    \left(\bigsqcup_{t \in \CC} \Conj(\widetilde{X}_t) \right).
  \end{equation*}
  Each connected component $\Conj(\widetilde{X}_t)$ 
  is
  \begin{enumerate}
    \item\label{SLmainSpace} 
    a $2$-dimensional complex submanifold of $\CC^4$
    that is homeomorphic to $\widetilde{H}_t \backslash \SL(2, \CC)$, and
    \item\label{SLmainQdle}
    a connected subquandle of $\Conj(\SL(2, \CC))$
    that is isomorphic to $Q(\SL(2, \CC), \widetilde{H}_t, \widetilde{\sigma}_t)$.
  \end{enumerate}
\end{thm}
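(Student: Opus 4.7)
The plan is to assemble the theorem from the apparatus already developed in the section, since each clause corresponds quite directly to a prior proposition or lemma. The only substantive work is the set-theoretic decomposition and the identification of $\widetilde{X}_t$ with a single connected component of $\Conj(\SL(2,\CC))$; the geometric part (1) is immediate from \cref{X_tIsCpxMfd} and \cref{homogSpSL}, and the quandle part (2) is immediate from \cref{connectedSL} and \cref{homogPresXt}.

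For the decomposition as sets, I would start from the trace function $\tr\colon \SL(2,\CC)\to\CC$. Since each $\widetilde{X}_t$ is defined as $\tr^{-1}(t)\setminus\{\pm I_2\}$, the sets $\widetilde{X}_t$ together with $\{I_2\}$ and $\{-I_2\}$ exhaust $\SL(2,\CC)$ and are pairwise disjoint (the only trace values reached by $\pm I_2$ are $\pm 2$, and we have excised them). This yields the set-level decomposition claimed.

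For the claim that each $\widetilde{X}_t$ is a connected component of $\Conj(\SL(2,\CC))$, I would argue as follows. By \cref{connectedSL}(\ref{connSL3}), $\Conj(\widetilde{X}_t)$ is a connected quandle, which means that $\widetilde{X}_t$ lies inside a single orbit of $\Inn^\Qdle(\Conj(\SL(2,\CC)))$ acting on $\SL(2,\CC)$. Conversely, the action of $\Inn^\Qdle(\Conj(\SL(2,\CC)))$ is realized by conjugation in $\SL(2,\CC)$ (via the surjection $\SL(2,\CC)\to\Inn^\Qdle$ analogous to the one in the proof of \cref{connectedSL}(\ref{connSL3})), and conjugation preserves the trace, so no inner automorphism can move a point of $\widetilde{X}_t$ outside $\tr^{-1}(t)$; since the orbit also cannot contain $\pm I_2$ (those are fixed by every conjugation and form their own orbits), the orbit is contained in $\widetilde{X}_t$. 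Thus the orbit is exactly $\widetilde{X}_t$. For the singletons, $\pm I_2$ lie in the center of $\SL(2,\CC)$, so every point symmetry fixes them, making $\{I_2\}$ and $\{-I_2\}$ singleton orbits.

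With the decomposition in hand, part (\ref{SLmainSpace}) is just \cref{X_tIsCpxMfd} (complex submanifold structure) combined with \cref{homogSpSL} (homeomorphism with $\widetilde{H}_t\backslash\SL(2,\CC)$), and part (\ref{SLmainQdle}) combines \cref{connectedSL}(\ref{connSL3}) (connectedness of the subquandle) with \cref{homogPresXt} (identification with $Q(\SL(2,\CC),\widetilde{H}_t,\widetilde{\sigma}_t)$). The main (and essentially only) obstacle is the verification that the trace-level orbit really is a single connected component of the conjugate quandle; but this reduces to invoking connectedness from one side and trace-invariance of conjugation from the other, so it is a short argument rather than a genuine difficulty.
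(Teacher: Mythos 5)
Your proposal is correct and follows essentially the same route as the paper: the trace fibration gives the set-level partition, connectedness of each $\Conj(\widetilde{X}_t)$ (from \cref{connectedSL}) combined with trace-invariance of conjugation identifies each $\widetilde{X}_t$ as a full component and the central elements $\pm I_2$ as singleton components, and parts (\ref{SLmainSpace}) and (\ref{SLmainQdle}) are read off from \cref{X_tIsCpxMfd}, \cref{homogSpSL}, \cref{connectedSL}, and \cref{homogPresXt}. Your write-up merely makes explicit the step the paper leaves implicit, namely that the inner automorphisms of $\Conj(\SL(2,\CC))$ act by conjugation and hence cannot move a point off its trace level set.
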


\begin{proof}
  For any $A \in \SL(2, \CC) \setminus \{\pm I_2\}$,
  there uniquely exists $t \in \CC$
  such that $A \in \widetilde{X}_t$
  by the definition of $\widetilde{X}_t$.
  Since conjugation preserves the trace
  and \cref{connectedSL},
  $\widetilde{X}_t$ is a connected component 
  of $\Conj(\SL(2,\CC))$.
  Since the center $Z(\SL(2, \CC))$ of $\SL(2,\CC)$ 
  is equal to $\{\pm I_2\}$,
  each element in $Z(\SL(2,\CC))$ is a connected component. 
  Hence, we have the decomposition.
  
  The topological space
  $\Conj(\widetilde{X}_t)=\widetilde{X}_t$
  is a $2$-dimensional complex submanifold of $\CC^4$
  by \cref{X_tIsCpxMfd}
  and homeomorphic to $\widetilde{H}_t \backslash \SL(2, \CC)$
  by \cref{homogSpSL}.
  Hence, we have (\ref{SLmainSpace}).
  $\Conj(\widetilde{X}_t)$ is a connected quandle by \cref{connectedSL}
  and isomorphic to $Q(\SL(2, \CC), \widetilde{H}_t, \widetilde{\sigma}_t)$
  by \cref{homogPresXt},
  and hence, we have (\ref{SLmainQdle}).
\end{proof}

    \section{The conjugate quandle of $\PSL(2, \CC)$}\label{PSL2}

In this section,
we consider the conjugate quandle of $\PSL(2, \CC)$.
Note that the projection 
$\pi: \SL(2, \CC) \to \PSL(2,\CC)$ is 
a covering map both 
as topological groups
and as quandles 
in the sense of \cite{Eisermann-2014-QuandleCoveringsTheirGalois}.
We denote the image $\pi(A)$ of $A \in \SL(2,\CC)$ by $f_A$.
Let $\mathcal{T}$ be the quotient space of $\CC$ modulo $z \sim -z$.
Note that $[t] = \{\pm t\} \in \mathcal{T}$ for each $t \in \CC$.
For $\tau \in \mathcal{T}$, we define 
the subset $X_\tau$ of $\PSL(2, \CC)$,
the base point $f_\tau \in X_\tau$,
and the subgroup $H_\tau$ of $\PSL(2,\CC)$ by 
\begin{align*}
  X_\tau &= 
    \{f_A \in \PSL(2, \CC) \setminus \{ \id \} \mid  \tr A \in \tau \},\\
  f_\tau &=
    \begin{cases*}
      T_1
      & if $\tau = [2]$,
      \\
      S_{\lambda_t^2}
      & if $\tau \in \mathcal{T} \setminus \{[2]\}$,\\
    \end{cases*}\\
  H_\tau &= 
    \begin{cases*}
      \left\{
        T_v \in \PSL(2, \CC)
        \mid 
        v \in \CC
      \right\}
      & if $\tau=[2]$,\\
      \left\{
        S_\lambda \in \PSL(2, \CC)
        \mid 
        \lambda \in \CC^*
      \right\}
      & if $\tau \in \mathcal{T} \setminus \{[0],[2]\}$,\\
      \left\{
        S_\lambda \in \PSL(2, \CC)
        \mid 
        \lambda \in \CC^*
      \right\}
      \cup 
      \left\{
        S_\lambda' \in \PSL(2, \CC)
        \mid 
        \lambda \in \CC^*
      \right\}
      & if $\tau=[0]$.
    \end{cases*}
\end{align*}
Note that $\pi^{-1}(X_\tau) = \bigsqcup_{t \in \tau} \widetilde{X}_t$.
By the classification of isometries,
the elements in $X_\tau$ have the same type.
The right action of $\SL(2, \CC)$
on $\widetilde{X}_t$ 
induces a right action of $\PSL(2, \CC)$ on $X_\tau$ ($\tau = [t]$).
For $\tau =[t] \in \mathcal{T}$ ($t \in \CC$),
the base point satisfies $f_\tau = f_{A_t} = \pi(A_t)$.
The action is transitive
since the action of $\SL(2, \CC)$ on $\widetilde{X}_t$ is transitive.
Note that $g \in H_\tau$ if and only if $g$ and $f_\tau$ commute,
and hence
$H_\tau$ is equal to 
the stabilizer subgroup at $f_\tau$ 
under the action of $\PSL(2, \CC)$.

\begin{lem}\label{X_tauIsCpxMfd}
  $X_\tau$ is a 2-dimensional complex manifold.
\end{lem}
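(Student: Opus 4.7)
The plan is to descend the complex manifold structure of $\widetilde{X}_t$ (given by \cref{X_tIsCpxMfd}) to $X_\tau$ through the covering map $\pi \colon \SL(2,\CC) \to \PSL(2,\CC)$. Since $\ker \pi = \{\pm I_2\}$ is a discrete central subgroup, $\pi$ is a covering map of complex Lie groups and in particular a local biholomorphism: around every $A \in \SL(2,\CC)$ one can choose an open neighborhood $U$ with $-A \notin U$ on which $\pi|_U \colon U \to \pi(U)$ is a biholomorphism.

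For each $f \in X_\tau$, I would fix a preimage $A \in \pi^{-1}(f)$, so $A \in \widetilde{X}_t$ for some $t \in \tau$, and take $U$ as above. If $\tau \neq [0]$, I would shrink $U$ further so that $U \cap \widetilde{X}_{-t} = \emptyset$; this is possible since $\tr(A) = t \neq -t$ and the trace is continuous, so a small enough neighborhood of $A$ has trace bounded away from $-t$. In both cases one then has
\[
  U \cap \pi^{-1}(X_\tau) = U \cap \widetilde{X}_t,
\]
which by \cref{X_tIsCpxMfd} is a 2-dimensional complex submanifold of $\CC^4$. Because $\pi|_U$ is a biholomorphism, its image
\[
  \pi(U) \cap X_\tau = \pi\bigl(U \cap \widetilde{X}_t\bigr)
\]
is a 2-dimensional complex submanifold of $\pi(U)$, which furnishes a complex chart of $X_\tau$ around $f$.

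Finally I would check that the charts obtained from different choices of lift glue holomorphically. When $\tau \neq [0]$ the restriction $\pi|_{\widetilde{X}_t} \colon \widetilde{X}_t \to X_\tau$ is a bijection (the companion preimage of any $f$ lies in $\widetilde{X}_{-t}$, not $\widetilde{X}_t$), so a globally consistent choice of lift avoids any ambiguity. When $\tau = [0]$ the two lifts of $f$ are $A$ and $-A$, both in $\widetilde{X}_0$; the switch between them is the restriction to $\widetilde{X}_0$ of the linear involution $A \mapsto -A$ of $\CC^4$, hence a biholomorphism, so the two local descriptions of $X_{[0]}$ agree on overlaps.

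The main difficulty is not analytic but organizational: one must simultaneously arrange that $U$ avoids $-A$ (so $\pi|_U$ is injective) and avoids $\widetilde{X}_{-t}$ (so the preimage of $X_\tau$ inside $U$ reduces to a single sheet), and one must handle the degenerate case $\tau = [0]$ separately via the deck transformation. Once this bookkeeping is in place, the fact that $\pi$ is a local biholomorphism together with \cref{X_tIsCpxMfd} makes the conclusion immediate.
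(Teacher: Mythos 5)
Your proof is correct and follows essentially the same route as the paper: both descend the complex structure of $\pi^{-1}(X_\tau)=\bigsqcup_{t\in\tau}\widetilde{X}_t$ (given by \cref{X_tIsCpxMfd}) to $X_\tau$ through the double cover $\pi$. The paper simply invokes the general fact that the orbit space of the free, properly discontinuous action of $\{\pm I_2\}$ on $\pi^{-1}(X_\tau)$ is a complex manifold, whereas you carry out the chart-level bookkeeping explicitly (including the freeness of the action, which is exactly your condition $-A\notin U$).
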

\begin{proof}
  The quandle isomorphism
  $\phi: \widetilde{X}_t \to \widetilde{X}_{-t}$
  satisfies $\pi \circ \phi = \pi$.
  Then
  $X_\tau$ is the orbit space of the action of the group generated by $\iota$
  on $\pi^{-1}(X_\tau) = \bigsqcup_{t \in \tau}\widetilde{X}_t$.
  The action is properly discontinuous since the group is finite.
  Since $\pi^{-1}(X_\tau)$ is 
  a 2-dimensional complex manifold from \cref{X_tIsCpxMfd},
  the orbit space is a complex manifold.
\end{proof}

\begin{prop}\label{homogSpPSL}
  The map $F: H_\tau \backslash \PSL(2, \CC) \to X_\tau$
  defined by $F(H_\tau g) = f_\tau \cdot g$ for $g \in \PSL(2, \CC)$
  is a homeomorphism.
\end{prop}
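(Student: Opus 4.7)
The plan is to apply \cref{homogHomeo} directly to the action of $\PSL(2,\CC)$ on $X_\tau$, mirroring the strategy used for \cref{homogSpSL}. To invoke that lemma we need four ingredients: $\PSL(2,\CC)$ should be a $\sigma$-compact, locally compact, Hausdorff topological group; $X_\tau$ should be a locally compact Hausdorff space; the action should be continuous and transitive; and $H_\tau$ should be the stabilizer at $f_\tau$.

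First I would verify the topological hypotheses on $\PSL(2,\CC)$. Since $\SL(2,\CC)$ is a closed subgroup of $\mathrm{GL}(2,\CC) \subset \CC^4$, it is $\sigma$-compact, locally compact, and Hausdorff; these properties pass to the quotient $\PSL(2,\CC) = \SL(2,\CC)/\{\pm I_2\}$ by the finite central subgroup $\{\pm I_2\}$. Next, $X_\tau$ is a $2$-dimensional complex manifold by \cref{X_tauIsCpxMfd}, so it is locally compact and Hausdorff.

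For the action, continuity and transitivity are already recorded in the paragraph preceding the statement: the $\SL(2,\CC)$-action on $\widetilde{X}_t$ (which is transitive and continuous by the analogous discussion in Section~\ref{SL2}) descends through $\pi: \SL(2,\CC) \to \PSL(2,\CC)$ to a continuous transitive $\PSL(2,\CC)$-action on $X_\tau$. The identification of $H_\tau$ with the stabilizer at $f_\tau$ is also stated there, namely $g \in H_\tau$ iff $g$ and $f_\tau$ commute.

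With all four conditions in hand, \cref{homogHomeo} yields that the map $H_\tau g \mapsto f_\tau \cdot g$ is a homeomorphism, which is precisely $F$. There is no real obstacle here; the only point requiring a line of care is making sure that the quotient topology on $\PSL(2,\CC)$ inherits $\sigma$-compactness and local compactness from $\SL(2,\CC)$, but this follows from the projection being an open continuous map with finite fibers.
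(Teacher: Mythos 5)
Your proposal is correct and is essentially the paper's own proof: the paper also proves this by applying \cref{homogHomeo} to the transitive action of $\PSL(2,\CC)$ on $X_\tau$, leaving the hypothesis checks (local compactness, $\sigma$-compactness, stabilizer identification) implicit where you spell them out.
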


\begin{proof}
  Apply \cref{homogHomeo} to 
  the transitive action of $\PSL(2,\CC)$ on $X_\tau$.
\end{proof}


We can define a quandle structure on $X_\tau$
as the subquandle of $\Conj(\PSL(2,\CC))$
and denote the quandle by $\Conj(X_\tau)$.
Note that $X_{[2]}$ is equal to the quandle $\mathcal{P}$
in \cite{Inoue-2014-QuandleHomologyComplexVolume}.

\begin{prop}\label{connectedPSL}
  Let $\tau \in \mathcal{T}$.
  \begin{enumerate}
    \item\label{connPSL1} 
    The subset $X_\tau$ 
    generates $\PSL(2,\CC)$ as a group.
    \item\label{connPSL2} 
    The inner automorphism group $\Inn^\Qdle{(\Conj(X_\tau))}$
    is isomorphic to $\PSL(2, \CC)$.
    \item\label{connPSL3} 
    The quandle $\Conj(X_\tau)$ is a connected quandle.
  \end{enumerate}
\end{prop}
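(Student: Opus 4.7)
The plan is to prove the three parts in sequence, in each case pushing the analogous result for $\widetilde{X}_t$ from \cref{connectedSL} along the covering projection $\pi: \SL(2,\CC) \to \PSL(2,\CC)$.

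For part (\ref{connPSL1}), I would fix any representative $t \in \tau$ and observe that $\pi(\widetilde{X}_t) \subset X_\tau$. By \cref{connectedSL}(\ref{connSL1}), $\widetilde{X}_t$ generates $\SL(2,\CC)$ as a group. Applying the surjective homomorphism $\pi$, the set $\pi(\widetilde{X}_t)$ generates $\pi(\SL(2,\CC)) = \PSL(2,\CC)$. Since this set is contained in $X_\tau$, the larger set $X_\tau$ also generates $\PSL(2,\CC)$.

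For part (\ref{connPSL2}), the isomorphism described in \cref{ConjQdle} gives $\Inn^\Qdle(\Conj(X_\tau)) \cong \Inn^\Grp(\langle X_\tau \rangle_\Grp)$. By part (\ref{connPSL1}), $\langle X_\tau \rangle_\Grp = \PSL(2,\CC)$, so this reduces to computing $\Inn^\Grp(\PSL(2,\CC)) = \PSL(2,\CC)/Z(\PSL(2,\CC))$. The center of $\PSL(2,\CC)$ is trivial, since any central lift would lie in $Z(\SL(2,\CC)) = \{\pm I_2\}$, both of which project to the identity in $\PSL(2,\CC)$. Hence $\Inn^\Qdle(\Conj(X_\tau)) \cong \PSL(2,\CC)$.

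For part (\ref{connPSL3}), I would mimic the argument in the proof of \cref{connectedSL}(\ref{connSL3}). The conjugation action of $\PSL(2,\CC)$ on $X_\tau$ is transitive (this was already established in the text, as it is inherited from the transitive action of $\SL(2,\CC)$ on $\widetilde{X}_t$). Under the surjective group homomorphism $s: \PSL(2,\CC) \to \Inn^\Qdle(\Conj(X_\tau))$ from \cref{ConjQdle} sending $g \mapsto s_g$ (using part (\ref{connPSL2})), the conjugation action agrees with the point-symmetry action: $f \cdot g = f \lhd s(g)$ for $f \in X_\tau, g \in \PSL(2,\CC)$. Transitivity of the $\PSL(2,\CC)$-action therefore forces transitivity of the $\Inn^\Qdle(\Conj(X_\tau))$-action, proving connectedness.

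There is no substantive obstacle here; the statement is essentially a routine transfer of \cref{connectedSL} across the projection $\pi$. The only point that deserves care is the triviality of $Z(\PSL(2,\CC))$ used in part (\ref{connPSL2}), and the bookkeeping of noting that $\pi^{-1}(X_\tau) = \bigsqcup_{t \in \tau}\widetilde{X}_t$ already contains a $\widetilde{X}_t$ that generates $\SL(2,\CC)$, so the surjectivity of $\pi$ does all the work for part (\ref{connPSL1}).
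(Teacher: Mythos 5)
Your proposal is correct and follows essentially the same route as the paper's own proof: part (\ref{connPSL1}) by pushing the generating set $\widetilde{X}_t$ forward along the surjection $\pi$, part (\ref{connPSL2}) via the identification $\Inn^\Qdle(\Conj(X_\tau)) \cong \Inn^\Grp(\langle X_\tau\rangle_\Grp)$ from \cref{ConjQdle} together with the triviality of $Z(\PSL(2,\CC))$, and part (\ref{connPSL3}) by transferring transitivity of the conjugation action through the homomorphism onto the inner automorphism group. The only differences are cosmetic, such as your explicit justification that the center of $\PSL(2,\CC)$ is trivial.
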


\begin{proof}
  (\ref{connPSL1})
  For $\tau=[t] \in \mathcal{T}$,
  since
  $X_\tau$ is the image of $\widetilde{X}_t$ by
  the surjective group homomorphism
  $\pi: \SL(2, \CC) \to \PSL(2, \CC)$
  and \cref{connectedSL} (\ref{connSL1}),
  $X_\tau$ generates $\PSL(2, \CC)$.

  (\ref{connPSL2})
  Since $\langle X_\tau \rangle_{\Grp}$ is equal to $\PSL(2, \CC)$
  by (\ref{connPSL1}), 
  the inner automorphism group $\Inn^\Qdle{(\Conj(X_\tau))}$
  is isomorphic to the inner automorphism group
  $\Inn^\Grp{(\langle X_\tau \rangle_{\Grp})} = \Inn^\Grp(\PSL(2,\CC))$.
  Since the center of $\PSL(2, \CC)$ is trivial,
  the group $\Inn^\Grp(\PSL(2, \CC))$ is isomorphic to $\PSL(2, \CC)$.
  Therefore,
  $\Inn^\Qdle{(\Conj(X_\tau))}$ is isomorphic to $\PSL(2, \CC)$.
  
  (\ref{connPSL3})
  The group isomorphism $S: \PSL(2, \CC) \to \Inn^\Qdle(\Conj(X_\tau))$
  satisfies $f \cdot g = f \lhd S(g)$ 
  for $f \in \Conj(X_\tau)$ and $g \in \PSL(2, \CC)$.
  Since the action of $\PSL(2,\CC)$ is transitive,
  $\Inn{X_\tau}$ acts transitively, which completes of proof. 
\end{proof}

We define an automorphism 
$\sigma_\tau \in \Aut{(\PSL(2, \CC))}$
by the inner automorphism 
$\sigma_\tau = \iota_{f_\tau^{-1}}$ of $f_\tau^{-1}$.
The restriction of the automorphism $\sigma_\tau$ 
to the quandle $X_\tau$ is equal to the point symmetry of $X_\tau$
at the base point $f_\tau$.

\begin{prop}\label{homogPresPSL}
  The quandle $\Conj(X_\tau)$ is isomorphic to 
  $Q(\PSL(2, \CC), H_\tau, \sigma_\tau)$.
\end{prop}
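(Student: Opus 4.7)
The plan is to invoke Proposition \ref{homogeneousQuandle} directly, mirroring the proof of Proposition \ref{homogPresXt} for the $\SL$-case, since all the relevant ingredients have already been assembled.

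First, I would take $G = \Inn^\Qdle(\Conj(X_\tau))$, which is a normal subgroup of $\Aut^\Qdle(\Conj(X_\tau))$ by general quandle theory. By \cref{connectedPSL}(\ref{connPSL2}), there is a group isomorphism $S \colon \PSL(2, \CC) \to G$, and by \cref{connectedPSL}(\ref{connPSL3}), the action of $G$ on $X_\tau$ through $S$ is transitive (it agrees with the natural $\PSL(2,\CC)$-action via $f \cdot g = f \lhd S(g)$).

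Next, I would fix the base point $x_0 = f_\tau \in X_\tau$. By the observation recorded just before \cref{X_tauIsCpxMfd} (namely that $g \in H_\tau$ iff $g$ commutes with $f_\tau$), the stabilizer of $f_\tau$ under the $\PSL(2,\CC)$-action is exactly $H_\tau$, which then transports to $S(H_\tau) \subset G$ under the isomorphism. Then I would identify the twisting automorphism: the point symmetry $s_{f_\tau} \in G$ corresponds under $S^{-1}$ to $f_\tau \in \PSL(2,\CC)$, since by definition of the $\Conj$-quandle structure the action of $s_{f_\tau}$ on $X_\tau$ is conjugation by $f_\tau^{-1}$, matching $f \cdot f_\tau = f \lhd s_{f_\tau}$. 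Therefore the automorphism $\sigma \colon G \to G$ given by $\sigma(h) = s_{f_\tau}^{-1} h s_{f_\tau}$ corresponds under $S$ to the inner automorphism $\iota_{f_\tau^{-1}}$ of $\PSL(2,\CC)$, which is exactly $\sigma_\tau$.

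Having matched $(G, H, \sigma)$ with $(\PSL(2,\CC), H_\tau, \sigma_\tau)$, I would conclude via \cref{homogeneousQuandle}(2) that the map $H_\tau \backslash \PSL(2, \CC) \to \Conj(X_\tau)$, $H_\tau g \mapsto f_\tau \cdot g$, is a quandle isomorphism, giving $\Conj(X_\tau) \cong Q(\PSL(2, \CC), H_\tau, \sigma_\tau)$. The only potentially delicate step is the bookkeeping that transfers the abstract data $(G, H, \sigma)$ supplied by \cref{homogeneousQuandle} back along the isomorphism $S$ into concrete data for $\PSL(2, \CC)$, but this is essentially a routine translation and presents no real obstacle.
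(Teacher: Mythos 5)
Your proposal is correct and follows exactly the paper's route: the paper's proof is the one-line application of \cref{homogeneousQuandle} to the transitive $\PSL(2,\CC)$-action on $\Conj(X_\tau)$, and your argument just spells out the identifications (base point $f_\tau$, stabilizer $H_\tau$, point symmetry $s_{f_\tau}$ corresponding to $\iota_{f_\tau^{-1}} = \sigma_\tau$) that the paper leaves implicit, using \cref{connectedPSL} as the paper intends.
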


\begin{proof}
  Apply \cref{homogeneousQuandle} to
  the action of $\PSL(2,\CC)$ on $\Conj(X_\tau)$.
\end{proof}

\begin{rmk}
  For $\tau = [t] \in \mathcal{T}$,
  the right action of $\SL(2, \CC)$
  on $\widetilde{X}_t$ 
  induces a right action of $\SL(2, \CC)$ on $X_\tau$
  such that the following diagram is commutative:
  \begin{equation*}
    \xymatrix{
      \SL(2, \CC) \times \widetilde{X}_t \ar[r] \ar[d]^{\id \times \pi} & \widetilde{X}_t \ar[d]^{\pi} \\
      \SL(2, \CC) \times X_\tau \ar[r] & X_\tau
    }
  \end{equation*}
  For $\tau =[t] \neq [0]$,
  the action induces a quandle triplet of $X_\tau$
  that is the same as that of $\widetilde{X}_t$ in \cref{homogPresXt}.
  Therefore,
  $X_\tau$ is isomorphic to $\widetilde{X}_t$ as a quandle
  for $\tau = [t] \neq [0]$.
\end{rmk}

\begin{thm}\label{PSLmain}
  The conjugate quandle $\Conj(\PSL(2, \CC))$ 
  of $\PSL(2,\CC)$ 
  is decomposed into connected components as follows:
  \begin{equation*}
    \Conj(\PSL(2, \CC)) = \{\id\} \sqcup 
    \left(\bigsqcup_{\tau \in \mathcal{T}} \Conj(X_\tau) \right).
  \end{equation*}
  Each connected component $\Conj(X_\tau)$ is
  \begin{enumerate}
    \item\label{PSLmainSpace} 
    a $2$-dimensional complex manifold
    that is homeomorphic to $H_\tau \backslash \PSL(2, \CC)$, and
    
    \item\label{PSLmainQdle}
    a connected subquandle of $\Conj(\PSL(2, \CC))$
    that is isomorphic to $Q(\PSL(2, \CC), H_\tau, \sigma_\tau)$.
  \end{enumerate}
\end{thm}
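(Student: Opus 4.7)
The plan is to mirror the proof of \cref{SLmain} exactly, assembling the propositions already established in this section. Nothing substantially new is required: each clause of the theorem corresponds to one previously proved statement, so the work is mostly in verifying that the pieces fit together cleanly.

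First I would establish the set-theoretic decomposition. Given $f \in \PSL(2,\CC) \setminus \{\id\}$, choose a lift $A \in \SL(2,\CC)$ with $\pi(A) = f$; the other lift is $-A$, so the pair $\{\tr A, -\tr A\}$ is a well-defined element $\tau \in \mathcal{T}$, and by the definition of $X_\tau$ we have $f \in X_\tau$ with $\tau$ uniquely determined by $f$. This gives $\PSL(2,\CC) \setminus \{\id\} = \bigsqcup_{\tau \in \mathcal{T}} X_\tau$. Since conjugation in $\SL(2,\CC)$ preserves the trace, conjugation in $\PSL(2,\CC)$ preserves $\tau$, so every $X_\tau$ is a union of connected components. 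Combined with the connectedness assertion in \cref{connectedPSL}(\ref{connPSL3}), each $X_\tau$ is in fact a single connected component. The identity element $\id$ forms its own component because the center of $\PSL(2,\CC)$ is trivial, so $\id$ is the unique element fixed by all inner automorphisms.

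Next I would handle (\ref{PSLmainSpace}). The fact that $X_\tau$ is a $2$-dimensional complex manifold is exactly \cref{X_tauIsCpxMfd}, and the homeomorphism $H_\tau \backslash \PSL(2,\CC) \xrightarrow{\cong} X_\tau$ is exactly \cref{homogSpPSL}. Finally, for (\ref{PSLmainQdle}), the connectedness of $\Conj(X_\tau)$ as a quandle is \cref{connectedPSL}(\ref{connPSL3}), and the isomorphism $\Conj(X_\tau) \cong Q(\PSL(2,\CC), H_\tau, \sigma_\tau)$ is \cref{homogPresPSL}.

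The only point that requires a little care, and is the closest thing to an obstacle, is the very first step: verifying that the classification of $f \in \PSL(2,\CC) \setminus \{\id\}$ by $\tau \in \mathcal{T}$ is both well-defined (independent of lift) and exhaustive, so that the decomposition is into genuinely disjoint, nonempty subsets indexed by $\mathcal{T}$. Once this bookkeeping is done, the rest of the proof is an immediate citation of the propositions above, in direct parallel with the proof of \cref{SLmain}.
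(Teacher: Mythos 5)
Your proposal is correct and follows essentially the same route as the paper: the same four results (\cref{X_tauIsCpxMfd}, \cref{homogSpPSL}, \cref{connectedPSL}, \cref{homogPresPSL}) are cited for the same clauses of the theorem. The only cosmetic difference is that you obtain the set-theoretic partition directly from the trace classification of lifts, whereas the paper pushes the decomposition of $\Conj(\SL(2,\CC))$ from \cref{SLmain} forward along the projection $\pi$ using $\pi^{-1}(\id)=\{\pm I_2\}$ and $\pi^{-1}(X_\tau)=\bigsqcup_{t\in\tau}\widetilde{X}_t$; these amount to the same bookkeeping.
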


\begin{proof}
  Since we have the decomposition of $\Conj(\SL(2, \CC))$ in \cref{SLmain}
  and the surjective quandle homomorphism 
  $\pi: \Conj(\SL(2, \CC)) \to \Conj(\PSL(2, \CC))$
  satisfies 
  $\pi^{-1}(\id) = \{I_2\} \sqcup \{-I_2\}$
  and $\pi^{-1}(X_\tau) = \bigsqcup_{t \in \tau} \widetilde{X}_t$,
  we have the decomposition of $\Conj(\PSL(2, \CC))$
  and each component is a connected component.
  
  The topological space
  $\Conj(X_\tau)=X_\tau$
  is a $2$-dimensional complex manifold by \cref{X_tauIsCpxMfd}
  and homeomorphic to $H_\tau \backslash \PSL(2, \CC)$
  by \cref{homogSpPSL}.
  Hence, we have (\ref{PSLmainSpace}).
  $\Conj(X_\tau)$ is a connected quandle by \cref{connectedPSL}
  and isomorphic to $Q(\PSL(2, \CC), H_\tau, \sigma_\tau)$
  by \cref{homogPresPSL}.
  Hence we have (\ref{PSLmainQdle}).
\end{proof}

    \section{The quandles obtained from Kleinian groups}\label{Klein}

In this section, 
we define a quandle $Q(\Gamma, \gamma)$
and show the main theorem.
Let $\Gamma$ be a Kleinian group,
$\gamma$ a non-trivial element in $\Gamma$,
and $\sigma_\gamma$ the inner automorphism $\iota_{\gamma^{-1}}$ of $\gamma^{-1}$.
We denote the centralizer of $\sigma_\gamma$ by $C_\Gamma(\gamma)$.
\begin{dfn}\label{KleinQdle}
  For a Kleinian group $\Gamma$ and 
  a non-trivial element $\gamma$ in $\Gamma$,
  let $Q(\Gamma, \gamma)$
  be the quandle defined 
  by the quandle triplet $(\Gamma , C_\Gamma(\gamma), \sigma_\gamma)$.
\end{dfn}

We can regard $Q(\Gamma, \gamma)$ as 
a generalization of the knot quandle 
of a hyperbolic knot 
by the next proposition.

\begin{prop}\label{hypKnotQdle}
  If $\Gamma$ is isomorphic to 
  the knot group $G(K)$ of a hyperbolic knot $K$
  and 
  $\gamma \in \Gamma$ is a parabolic element
  corresponding to the meridian $m \in G(K)$,
  then $Q(\Gamma, \gamma)$ is isomorphic to 
  the knot quandle $Q(K)$ of $K$.
\end{prop}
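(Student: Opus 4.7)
The strategy is to identify both $Q(K)$ and $Q(\Gamma,\gamma)$ with the same concrete subquandle of $\Conj(\Gamma)$, namely the conjugacy class $\mathcal{C}:=\{g^{-1}\gamma g\mid g\in\Gamma\}$ of $\gamma$ (which corresponds to the conjugacy class of the meridian $m$ under the given isomorphism $\Gamma\cong G(K)$).

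First, I would invoke the embedding of the knot quandle already recalled in the introduction. Since every hyperbolic knot is prime, the natural map $i\colon Q(K)\to\As(Q(K))$ is injective by \cite[Corollary 3.6]{Ryder-1996-AlgebraicConditionDetermineWhether}, and \cite[Corollary 3.3]{Fenn-1992-RacksLinksCodimensionTwo} identifies $\As(Q(K))$ with $G(K)$; composing turns $i$ into an injective quandle homomorphism $Q(K)\hookrightarrow\Conj(G(K))$. Because $Q(K)$ is connected and $i$ sends the chosen meridian to $m$, the image is the entire conjugacy class of $m$: indeed, iterated quandle operations on meridians realise conjugation by arbitrary words in the meridians, and such words exhaust $G(K)$. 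Transporting this inclusion along $\Gamma\cong G(K)$ yields a quandle isomorphism $Q(K)\cong\mathcal{C}$.

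Next, I would identify $Q(\Gamma,\gamma)$ with $\mathcal{C}$. The map $\phi\colon C_\Gamma(\gamma)\backslash\Gamma\to\mathcal{C}$ defined by $\phi(C_\Gamma(\gamma)g)=g^{-1}\gamma g$ is well-defined and bijective, since $g^{-1}\gamma g=h^{-1}\gamma h$ is equivalent to $hg^{-1}\in C_\Gamma(\gamma)$, i.e.\ to $C_\Gamma(\gamma)g=C_\Gamma(\gamma)h$. To verify that $\phi$ intertwines the quandle operations I would apply \cref{homogeneousQuandle} to the transitive right action of $\Gamma$ on $\mathcal{C}$ by conjugation $x\cdot g=g^{-1}xg$, noting that the stabilizer at $\gamma$ is $C_\Gamma(\gamma)$ and that the group automorphism induced by the point symmetry at $\gamma$ in the sense of \cref{homogeneousQuandle} is precisely $\iota_{\gamma^{-1}}=\sigma_\gamma$. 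Alternatively, a direct computation shows that $\phi(C_\Gamma(\gamma)x\lhd C_\Gamma(\gamma)y)$ and $\phi(C_\Gamma(\gamma)x)\lhd\phi(C_\Gamma(\gamma)y)$ both reduce to $y^{-1}\gamma^{-1}yx^{-1}\gamma xy^{-1}\gamma y$.

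Composing the two isomorphisms gives $Q(\Gamma,\gamma)\cong\mathcal{C}\cong Q(K)$. The hard part is the first step: the embedding $Q(K)\hookrightarrow\Conj(G(K))$ with image equal to a full conjugacy class depends crucially on the primeness of hyperbolic knots (through Ryder's theorem) together with the connectedness of $Q(K)$; once this is in place, the subsequent identification with the quandle-triplet presentation is a formal application of \cref{homogeneousQuandle}.
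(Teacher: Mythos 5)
Your proof is correct, but it takes a genuinely different route from the paper's. The paper works entirely at the level of quandle triplets: it invokes Joyce's Corollary~16.2, which presents the knot quandle as $Q(G(K),P(K),\sigma_m)$ with $P(K)$ the peripheral subgroup, and then uses the geometry of the cusp --- this is exactly where parabolicity of $\gamma$ enters --- to show $C_\Gamma(\gamma)=P(K)$, so that $Q(\Gamma,\gamma)$ and $Q(K)$ arise from literally the same triplet. You instead realize both quandles inside $\Conj(\Gamma)$ as the conjugacy class $\mathcal{C}$ of $\gamma$ (equivalently of $m$): for $Q(K)$ via Ryder's injectivity theorem (using primeness of hyperbolic knots), Fenn--Rourke's identification $\As(Q(K))\cong G(K)$, and connectedness of $Q(K)$; for $Q(\Gamma,\gamma)$ via the orbit map $C_\Gamma(\gamma)g\mapsto g^{-1}\gamma g$, whose compatibility with the triplet operation your direct computation verifies (that computation is the cleaner justification, since applying \cref{homogeneousQuandle} verbatim would require replacing $\Gamma$ by its image in $\Aut^\Qdle(\mathcal{C})$; the paper commits the same harmless abuse elsewhere, e.g.\ in \cref{homogPresXt}). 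What each approach buys: the paper's proof is shorter once Joyce's presentation theorem is granted and stays in the triplet language used throughout the rest of the paper, at the cost of the geometric input $C_\Gamma(\gamma)=P(K)$; yours trades that for the topological input of primeness via Ryder's nontrivial theorem, never uses parabolicity at all --- so it in fact proves the statement for any isomorphism of a prime knot group onto $\Gamma$ carrying a meridian to $\gamma$ --- and in passing re-derives the embedding $Q(K)\hookrightarrow\Conj(G(K))$ sketched in the introduction.
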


\begin{proof}
  Since $K$ is a hyperbolic knot,
  there exists a holonomy representation
  $\rho: G(K) \to \PSL(2, \CC)$
  such that $\rho(G(K)) = \Gamma$ and $\rho(m) = \gamma$.
  We identify the knot group $G(K)$ 
  and the Kleinian group $\Gamma$ by the representation $\rho$.
  Then, the group $C_\Gamma(\gamma)$ is equal to 
  the peripheral subgroup $P(K)$ of $G(K)$.
  The knot quandle $Q(K)$ is isomorphic to the quandle $Q(G(K), P(K), \sigma_m)$,
  where $\sigma_m$ is the inner automorphism $\iota_{m^{-1}}$
  \cite[Corollary 16.2]{Joyce-1982-ClassifyingInvariantKnotsKnot}.
  Hence, $Q(\Gamma, \gamma)$ and $Q(K)$ are obtained 
  from the same quandle triplet 
  $(\Gamma, C_\Gamma(\gamma), \sigma_\gamma)$ and $(G(K), P(K), \sigma_m)$,
  which completes the proof.
\end{proof}


\begin{lem}\label{ConjIso}
  The map 
  $\overline{\iota_f}: Q(\Gamma, \gamma) 
  \to Q(\iota_{f}(\Gamma), \iota_f(\gamma))$
  induced by $\iota_f$
  is a quandle isomorphism,
  where $\iota_f$ is the inner automorphism induced by $f \in \PSL(2, \CC)$.
\end{lem}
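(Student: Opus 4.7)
The plan is to apply \cref{injective_lemma} to the restriction $\rho := \iota_f|_\Gamma : \Gamma \to \iota_f(\Gamma)$, viewed as a group homomorphism from the underlying group of the source triplet $(\Gamma, C_\Gamma(\gamma), \sigma_\gamma)$ to that of the target triplet $(\iota_f(\Gamma), C_{\iota_f(\Gamma)}(\iota_f(\gamma)), \sigma_{\iota_f(\gamma)})$. This will yield a well-defined injective quandle homomorphism $\overline{\iota_f}$, and surjectivity will be deduced from the surjectivity of $\iota_f$ as a map $\Gamma \to \iota_f(\Gamma)$.

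The two hypotheses of \cref{injective_lemma} reduce to formal identities about inner automorphisms. For the first, since $\sigma_\gamma = \iota_{\gamma^{-1}}$ and $\iota_f$ is a group automorphism of $\PSL(2,\CC)$, for every $a \in \Gamma$ we have
\[
  \iota_f(\sigma_\gamma(a)) = \iota_f(\gamma^{-1} a \gamma) = \iota_f(\gamma)^{-1}\,\iota_f(a)\,\iota_f(\gamma) = \sigma_{\iota_f(\gamma)}(\iota_f(a)),
\]
so $\sigma_{\iota_f(\gamma)} \circ \iota_f = \iota_f \circ \sigma_\gamma$. For the second, since $\iota_f$ is an injective group homomorphism, an element $h \in \Gamma$ commutes with $\gamma$ if and only if $\iota_f(h)$ commutes with $\iota_f(\gamma)$; because $\iota_f(h) \in \iota_f(\Gamma)$ automatically, this gives $\iota_f(C_\Gamma(\gamma)) = C_{\iota_f(\Gamma)}(\iota_f(\gamma))$. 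This single identity yields both the containment $\rho(H') \subset H$ needed for part (\ref{inj1}) and the equality $H' = \rho^{-1}(H)$ needed for part (\ref{inj2}) of the lemma, so $\overline{\iota_f}$ is a well-defined injective quandle homomorphism.

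For surjectivity, every element of $Q(\iota_f(\Gamma), \iota_f(\gamma))$ has the form $C_{\iota_f(\Gamma)}(\iota_f(\gamma)) \cdot b$ for some $b \in \iota_f(\Gamma)$, and writing $b = \iota_f(a)$ with $a \in \Gamma$ exhibits this coset as $\overline{\iota_f}(C_\Gamma(\gamma) \cdot a)$. Alternatively, applying \cref{injective_lemma} symmetrically to $\iota_{f^{-1}} : \iota_f(\Gamma) \to \Gamma$ produces a quandle homomorphism in the opposite direction, and the two compositions are the identities at the group level since $\iota_f \circ \iota_{f^{-1}} = \iota_{f^{-1}} \circ \iota_f = \id$. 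There is no real obstacle; the only point requiring attention is that the stabilizer in the target triplet is the centralizer computed inside $\iota_f(\Gamma)$, which is precisely what makes $\iota_f$ induce a bijection between the two stabilizer subgroups.
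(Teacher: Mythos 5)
Your proposal is correct and follows essentially the same route as the paper: verify the intertwining identity $\sigma_{\iota_f(\gamma)}\circ\iota_f=\iota_f\circ\sigma_\gamma$, check that $\iota_f$ carries $C_\Gamma(\gamma)$ exactly onto $C_{\iota_f(\Gamma)}(\iota_f(\gamma))$ so that both hypotheses of \cref{injective_lemma} hold, and then deduce surjectivity of $\overline{\iota_f}$ from surjectivity of $\iota_f$ onto $\iota_f(\Gamma)$. The extra remark about applying \cref{injective_lemma} symmetrically to $\iota_{f^{-1}}$ is a harmless alternative way to see bijectivity.
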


\begin{proof}
  Since
  any $g \in \Gamma$ satisfies 
  \begin{align*}
    \sigma_{\iota_f(\gamma)} \circ \iota_f(g) 
    = (f \gamma f^{-1})^{-1} (f g f^{-1}) (f \gamma f^{-1})
    = f (\gamma^{-1} g \gamma) f^{-1}
    = \iota_f \circ \sigma_\gamma (g),
  \end{align*}
  we have $\sigma_{\iota_f(\gamma)} \circ \iota_f = \iota_f \circ \sigma_\gamma$.
  By definition,
  we have $\iota_f(C_\Gamma(\gamma)) = C_{\iota_f(\Gamma)}(\iota_f(\gamma))$.
  Since $\iota_f$ is bijective,
  we have $C_\Gamma(\gamma) = \iota_f^{-1}(C_{\iota_f(\Gamma)}(\iota_f(\gamma)))$.
  Therefore, 
  $\overline{\iota_f}: Q(\Gamma, \gamma) \to Q(\iota_f(\Gamma), \iota_f(\gamma))$
  is an injective quandle homomorphism by \cref{injective_lemma}.
  Since $\iota_f$ is surjective, 
  the map $\overline{\iota_f}$ is surjective.
  Therefore, $\iota_f$ is a quandle isomorphism.
\end{proof}

\begin{lem}\label{inducedHom}
  If $\gamma \in H_\tau$,
  then
  there exists 
  an injective quandle homomorphism 
  $\psi: Q(\Gamma, \gamma) \to Q(\PSL(2, \CC), H_\tau, \sigma_\tau)$.
\end{lem}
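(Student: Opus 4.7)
The plan is to apply \cref{injective_lemma} to a group homomorphism $\rho : \Gamma \to \PSL(2,\CC)$ that I define by conjugation. The first step is to pick an element $h \in \PSL(2,\CC)$ satisfying $h\gamma h^{-1} = f_\tau$ and set $\rho(g) := hgh^{-1}$. Such an $h$ exists because $\gamma$ and $f_\tau$ lie in the same $\PSL(2,\CC)$-conjugacy class $X_\tau$: the hypothesis $\gamma \in H_\tau$, combined with the standing context that $\tau$ is the trace class of $\gamma$, places $\gamma$ in $H_\tau \cap X_\tau$, whose elements are by definition conjugate to $f_\tau$.

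Having fixed $\rho$, the next step is to check the three input conditions of \cref{injective_lemma} for the source triplet $(\Gamma, C_\Gamma(\gamma), \sigma_\gamma)$ and the target triplet $(\PSL(2,\CC), H_\tau, \sigma_\tau)$. The intertwining $\sigma_\tau \circ \rho = \rho \circ \sigma_\gamma$ falls out of the identity $h^{-1} f_\tau h = \gamma$ via
\[
\sigma_\tau(\rho(g)) = f_\tau^{-1}(hgh^{-1})f_\tau = h(h^{-1} f_\tau^{-1} h)g(h^{-1} f_\tau h)h^{-1} = h\gamma^{-1}g\gamma h^{-1} = \rho(\sigma_\gamma(g)).
\]
For both $\rho(C_\Gamma(\gamma)) \subset H_\tau$ and $\rho^{-1}(H_\tau) = C_\Gamma(\gamma)$, the key observation is that $g$ commutes with $\gamma$ if and only if $\rho(g) = hgh^{-1}$ commutes with $h\gamma h^{-1} = f_\tau$, i.e., lies in $C_{\PSL(2,\CC)}(f_\tau) = H_\tau$. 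Then \cref{injective_lemma} yields the required injective quandle homomorphism $\psi := \overline{\rho}$.

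The delicate part is the very first step: choosing $h$ with $h\gamma h^{-1} = f_\tau$. In the intended application to the proof of \cref{mainthm}, this is secured either by the hypothesis $\gamma \in H_\tau$ already identifying $\gamma$ with a standard-form representative of $X_\tau$, or by first invoking \cref{ConjIso} to replace $(\Gamma, \gamma)$ by a conjugate pair putting $\gamma$ into that standard form; once $h$ is fixed, the remaining verifications are routine manipulations inside $\PSL(2,\CC)$.
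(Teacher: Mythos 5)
Your proof is correct and is essentially the paper's argument: the paper first conjugates $\gamma$ to the base point $f_\tau$ via \cref{ConjIso} and then applies \cref{injective_lemma} to the inclusion $\iota_f(\Gamma) \hookrightarrow \PSL(2,\CC)$, whereas you apply \cref{injective_lemma} once to the composite homomorphism $g \mapsto hgh^{-1}$ --- the same reduction carried out in a single step, with the same verifications ($\sigma_\tau\circ\rho=\rho\circ\sigma_\gamma$, $\rho^{-1}(H_\tau)=C_\Gamma(\gamma)$ using $H_\tau = C_{\PSL(2,\CC)}(f_\tau)$). Your reading of the hypothesis, namely that $\gamma$ must be conjugate to $f_\tau$ (i.e.\ $\gamma \in X_\tau$), is also exactly how the paper's own proof and its application in \cref{mainthm} use the lemma.
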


\begin{proof}
  There exists $f \in \PSL(2, \CC)$
  such that $f \gamma f^{-1} = f_\tau$,
  where $f_\tau$ is the base point of $X_\tau$.
  The map $\overline{\iota_f}: Q(\Gamma, \gamma) \to Q(\Gamma^f, f^{-1} \gamma f)$
  is a quandle isomorphism by \cref{ConjIso}.
  Hence, we can assume $\Gamma = \iota_f(\Gamma)$ and $\gamma = f_\tau$.
  Then the group $C_\Gamma(\gamma)$ is a subgroup of $H_{\tau}$
  and $\sigma_{\gamma} = \sigma_\tau$.
  Hence,
  the inclusion map $i: \Gamma \to \PSL(2, \CC)$
  induces the quandle homomorphism 
  $\overline{i}: Q(\Gamma, \gamma) \to Q(\PSL(2, \CC), H_\tau, \sigma_\tau)$
  by \cref{injective_lemma} (\ref{inj1}).
  Since $C_\Gamma(\gamma) = H_\tau \cap \Gamma = i^{-1}(H_\tau)$,
  the quandle homomorphism $\overline{i}$ is injective 
  by \cref{injective_lemma} (\ref{inj2}).
  Therefore, $\psi := \overline{i} \circ \overline{\iota_f}$
  is an injective quandle homomorphism.
\end{proof}

\begin{dfn}\label{canonicalmap}
  Let $\psi: Q(\Gamma, \gamma) \to \Conj(\PSL(2, \CC))$
  be the injective quandle homomorphism in \cref{inducedHom},
  $F: Q(\PSL(2, \CC), H_\tau, \sigma_\tau) \to \Conj(X_\tau)$
  the quandle isomorphism in \cref{PSLmain}, 
  and $i: \Conj(X_\tau) \to \Conj(\PSL(2, \CC))$
  the inclusion map.
  We call the composite map 
  $i \circ F \circ \psi: Q(\Gamma, \gamma) \to \Conj(\PSL(2, \CC))$
  a \emph{canonical map}.
\end{dfn}

The proof of the discreteness of $Q(\Gamma, \gamma)$ 
is reduced to the following lemma.

\begin{lem}[
  {\cite[Section 1.2, Lemma 4]{Godement-2017-IntroductionTheoryLieGroups}}
  ]
  \label{discrete_lemma}
  Let $G$ be a topological group,
  $H$ and $K$ closed subgroups of $G$,
  and $p: G \to G/H$ the natural projection.
  Suppose that $G$ is locally compact and Hausdorff,
  and $H / (H \cap K)$ is compact.
  Then the following hold:
  \begin{enumerate}
    \item The image $p(K)$ is closed in $G/H$.
    \item If $K$ is discrete, then $p(K)$ is discrete in $G/H$.
  \end{enumerate}
\end{lem}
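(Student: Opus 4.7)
The plan is to handle the two parts via different auxiliary quotients, each exploiting the compactness of $H/(H\cap K)$ in a different way. For (1), I would pass through the right quotient $q\colon G\to K\backslash G$, which is Hausdorff since $K$ is closed. Restricting $q$ to $H$, the relation $q(h_1)=q(h_2)$ holds iff $h_2h_1^{-1}\in H\cap K$, so $q|_H$ factors through a continuous injection $\bar q\colon (H\cap K)\backslash H\hookrightarrow K\backslash G$ with image $q(H)$. Inversion identifies $(H\cap K)\backslash H$ with $H/(H\cap K)$, so the domain is compact and $q(H)$ is compact, hence closed in the Hausdorff space $K\backslash G$. Since $q^{-1}(q(H))=KH=p^{-1}(p(K))$ and both $p$ and $q$ are quotient maps, closedness of $q(H)$ yields closedness of $KH$ in $G$ and hence of $p(K)$ in $G/H$.

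For (2), translating by $k^{-1}$ (a homeomorphism of $G/H$ that permutes $p(K)$) reduces the problem to isolating the identity coset $H$ in $p(K)$. I would pick a compact neighborhood $C$ of $e$ in $G$ using local compactness, and a compact $D\subseteq H$ with $D(H\cap K)=H$ using compactness of $H/(H\cap K)$. Any $k\in K\cap CH$ then factors as $k=cdh_0$ with $c\in C$, $d\in D$, $h_0\in H\cap K$, so $kh_0^{-1}\in CD\cap K$; since $CD$ is compact and $K$ is discrete and closed, the set $S:=CD\cap K=\{s_1,\dots,s_n\}$ is finite, and $p(CH\cap K)\subseteq\{s_1H,\dots,s_nH\}$.

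Since $H$ is closed, $G/H$ is Hausdorff, so for each $s_i\notin H$ I pick an open neighborhood $W_i$ of $H$ in $G/H$ missing $s_iH$; let $V$ be the intersection of these $W_i$ with the open set $p(\mathrm{int}\,C)$ (open because $p$ is an open map). Any $k\in K$ with $kH\in V$ lies in $p^{-1}(V)\subseteq CH$, so by the finiteness step $kH\in\{s_1H,\dots,s_nH\}$; membership in each $W_i$ then forces $kH=H$, proving that $H$ is isolated in $p(K)$. The main delicacy is in part (2): one has to marry the compactness of $H/(H\cap K)$ with the local compactness of $G$ in order to produce the finite candidate set $S$, without which the Hausdorff separation step would have nothing to separate.
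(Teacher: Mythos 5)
Your argument is correct. Note that the paper does not prove this lemma at all—it is quoted verbatim from Godement (Section 1.2, Lemma 4)—so there is no internal proof to compare against; what you have written is a correct, self-contained proof along the standard lines. Part (1) (pass to $K\backslash G$, observe that $q(H)$ is the continuous image of the compact space $H/(H\cap K)$ via inversion, hence closed in the Hausdorff space $K\backslash G$, and pull back through $q^{-1}(q(H))=KH=p^{-1}(p(K))$) and part (2) (translate to the identity coset, produce the finite set $CD\cap K$ from a compact neighborhood $C$ of $e$ and a compact transversal $D$ for $H\cap K$ in $H$, then separate with finitely many Hausdorff neighborhoods) both check out. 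The only step you use silently is the existence of the compact set $D\subseteq H$ with $D(H\cap K)=H$: compactness of $H/(H\cap K)$ alone does not give this, and one should say that $H$, being a closed subgroup of the locally compact Hausdorff group $G$, is itself locally compact, so that finitely many compact neighborhoods of points of $H$, chosen so that their interiors project onto a cover of the compact quotient, furnish $D$. With that sentence added, the proof is complete.
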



\begin{lem}\label{elemPara}
  If $\gamma$ is parabolic, 
  then $C_\Gamma(\gamma)$ is an elementary group of parabolic type.
\end{lem}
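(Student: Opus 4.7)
The plan is to show in sequence that $C_\Gamma(\gamma)$ is (i) elementary, (ii) discrete, (iii) infinite (hence not of elliptic type), and (iv) contains no loxodromic elements (hence not of hyperbolic type), so that by the trichotomy recalled in Section \ref{Hyp3sp} it must be of parabolic type.

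First I would note that since $\gamma$ is parabolic, $\Fix{\gamma}$ is a single point $p \in \partial \HH^3$. If $g \in C_\Gamma(\gamma)$, then $g\gamma g^{-1} = \gamma$, so $g$ preserves $\Fix{\gamma}$; since this set is the singleton $\{p\}$, we conclude $g(p)=p$. Hence the orbit $C_\Gamma(\gamma) \cdot p = \{p\}$ is finite, which shows that $C_\Gamma(\gamma)$ is elementary. Discreteness is immediate: $C_\Gamma(\gamma)$ is a subgroup of the Kleinian group $\Gamma$.

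Next I would rule out elliptic type. Any parabolic element of $\PSL(2,\CC)$ is conjugate to $T_1$, which has infinite order, so $\gamma$ generates an infinite cyclic subgroup of $C_\Gamma(\gamma)$; in particular $C_\Gamma(\gamma)$ is infinite. By \cref{discElemEllipt}, a discrete elementary subgroup of elliptic type is finite, so $C_\Gamma(\gamma)$ cannot be of elliptic type. The main step — and the only one requiring care — is excluding hyperbolic type. Suppose for contradiction that some $g \in C_\Gamma(\gamma)$ is loxodromic, with fixed-point set $\{p,q\}\subset\partial\HH^3$ (we already know $p$ is among them, since $g$ fixes $p$; thus $q \ne p$). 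Because $\gamma$ commutes with $g$, it must preserve $\{p,q\}$. But $\gamma$ has the unique fixed point $p$, so it cannot fix $q$; and $\gamma(q) = p$ combined with $\gamma(p)=p$ forces $\gamma$ not to be a bijection on $\{p,q\}$, a contradiction. Hence $C_\Gamma(\gamma)$ contains no loxodromic element, and by \cref{discElemHyp} it is not of hyperbolic type.

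Combining (i)--(iv) with the trichotomy of elementary discrete subgroups, $C_\Gamma(\gamma)$ must be of parabolic type, which completes the proof. The only slightly delicate point is the two-case dichotomy in step (iv) (whether $\gamma$ fixes $q$ or swaps $\{p,q\}$), and both cases collapse immediately using that a parabolic element has exactly one fixed point on $\partial\HH^3$.
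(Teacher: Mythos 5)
Your proof is correct, and it reaches the conclusion by a genuinely different route in its second half. The first step (elementarity) is the same as the paper's: every $g\in C_\Gamma(\gamma)$ must fix the unique fixed point $p\in\partial\HH^3$ of the parabolic $\gamma$, so the orbit of $p$ is finite. After that, the paper verifies parabolic type \emph{directly}: it quotes Ratcliffe's Theorem 5.5.4 to conclude that the discrete group $C_\Gamma(\gamma)$, containing a parabolic, has no loxodromic element, deduces that $C_\Gamma(\gamma)$ preserves horospheres based at $p$, hence is conjugate into a discrete subgroup of $\Isom^+(\EE^2)$, and then invokes the characterization in \cref{discElemPara}. You instead argue by \emph{elimination}: $C_\Gamma(\gamma)$ is discrete (subgroup of $\Gamma$) and infinite (it contains $\langle\gamma\rangle$), so it is not of elliptic type by \cref{discElemEllipt}; and it contains no loxodromic element, so it is not of hyperbolic type by \cref{discElemHyp}; since hyperbolic type is by definition the complementary case, parabolic type follows from the trichotomy. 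Your exclusion of loxodromics is also self-contained and does not use discreteness: a loxodromic $g\in C_\Gamma(\gamma)$ would fix $p$, so $\Fix{g}=\{p,q\}$ with $q\neq p$, and commutativity forces $\gamma$ to permute $\{p,q\}$ while fixing $p$, which is impossible for a parabolic (it can neither fix $q$ nor map $q\mapsto p$ injectively). What each approach buys: the paper's argument identifies the structure of $C_\Gamma(\gamma)$ concretely (a horosphere-preserving Euclidean group), which is in the spirit of how \cref{discretePara} later uses it, whereas your argument is shorter, avoids citing Ratcliffe 5.5.4 and the horosphere step, and replaces them with an elementary commutation/fixed-point computation plus the classification results already quoted in Section \ref{Hyp3sp}. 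Both are complete proofs of the lemma.
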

\begin{proof}
  Suppose that $\gamma \in \Gamma$ is parabolic.
  Then there uniquely exists $x \in \partial \HH^3$
  such that $\gamma(x) = x$.
  Any $g \in C_\Gamma(\gamma)$ satisfies
  \begin{equation*}
    g(x) = g(\gamma (x)) = \gamma(g(x)).
  \end{equation*}
  Hence $g(x)$ is a fixed point of the parabolic element $\gamma$,
  and we have $g(x) = x$.
  Therefore the orbit of $x$ by $C_\Gamma(\gamma)$ is finite
  and we have that $C_\Gamma(\gamma)$ is elementary.
  Since $C_\Gamma(\gamma)$ is discrete in $\PSL(2, \CC)$ 
  and has a parabolic element,
  $C_\Gamma(\gamma)$ has no loxodromic element 
  \cite[Theorem 5.5.4]{Ratcliffe-2019-FoundationsHyperbolicManifolds}.
  Hence $C_\Gamma(\gamma)$ preserves horospheres based at $x$,
  thus $C_\Gamma(\gamma)$ is conjugate to a discrete subgroup of $\Isom^+(\EE^2)$
  in $\PSL(2,\CC)$.
  Therefore,
  $C_\Gamma(\gamma)$ is an elementary group of 
  parabolic type by \cref{discElemPara}.
\end{proof}

\begin{prop}\label{discretePara}
  Suppose that a Kleinian group $\Gamma$ is isomorphic 
  to the fundamental group of 
  a complete hyperbolic $3$-orbifold of finite volume.
  Let $\gamma$ be a parabolic element in $\Gamma$.
  Then,
  the image of the map 
  $\psi: Q(\Gamma, \gamma) \to Q(\PSL(2, \CC), H_{[2]}, \sigma_{[2]})$ 
  in \cref{inducedHom}
  is discrete in $H_{[2]} \backslash \PSL(2, \CC)$.
\end{prop}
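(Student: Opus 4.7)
The plan is to invoke \cref{discrete_lemma} with $G=\PSL(2,\CC)$, $H=H_{[2]}$, and $K=\Gamma$. Using \cref{ConjIso}, I first conjugate $\Gamma$ by an element of $\PSL(2,\CC)$ so that $\gamma=f_{[2]}=T_1$; the conjugated group is still the holonomy of a complete hyperbolic $3$-orbifold of finite volume, the map $\psi$ becomes the homomorphism induced by the inclusion $\Gamma\hookrightarrow\PSL(2,\CC)$, and $C_\Gamma(\gamma)=\Gamma\cap H_{[2]}$.

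Next I would verify the hypotheses of \cref{discrete_lemma}: $\PSL(2,\CC)$ is locally compact and Hausdorff, $H_{[2]}$ is closed, and $\Gamma$ is closed because it is discrete. The nontrivial point is that $H_{[2]}/(\Gamma\cap H_{[2]})$ is compact. To see this, identify $H_{[2]}=\{T_v\mid v\in\CC\}$ with the additive group $\CC$, and observe that $C_\Gamma(\gamma)$ consists of exactly the pure translations in the cusp stabilizer $\mathrm{Stab}_\Gamma(\infty)$: if $g(z)=az+b$, a short computation gives $gT_1g^{-1}=T_a$, which equals $T_1$ in $\PSL(2,\CC)$ only when $a=1$. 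By \cref{elemPara}, $C_\Gamma(\gamma)$ is already an elementary discrete group of parabolic type; the finite-volume hypothesis upgrades this to cocompactness, because the standard cusp cross-section theorem tells us that $\mathrm{Stab}_\Gamma(\infty)$ acts on any horosphere at $\infty$ as a $2$-dimensional Euclidean crystallographic group with compact quotient, and the translation subgroup of any such group is a rank-$2$ lattice.

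Applying \cref{discrete_lemma} then shows that the image of $\Gamma$ in $\PSL(2,\CC)/H_{[2]}$ is closed and discrete; transferring via the homeomorphism $g\mapsto g^{-1}$ between left and right coset spaces yields the required discreteness of the image of $\psi$ in $H_{[2]}\backslash\PSL(2,\CC)$. The main obstacle in this plan is the cocompactness step: this is precisely where the finite-volume hypothesis is essential (without it, $C_\Gamma(\gamma)$ could be merely cyclic, e.g., for a Schottky-type group containing $T_1$), and it requires invoking the standard cusp cross-section theorem for finite-volume hyperbolic $3$-orbifolds.
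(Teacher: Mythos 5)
Your proposal is correct and follows essentially the same route as the paper: reduce via \cref{ConjIso} to $\gamma = T_1$, establish compactness of $H_{[2]}/C_\Gamma(\gamma)$ from the finite-volume hypothesis, and conclude with \cref{discrete_lemma} applied to $G=\PSL(2,\CC)$, $H=H_{[2]}$, $K=\Gamma$. The only difference is cosmetic: you justify compactness by identifying $C_\Gamma(\gamma)$ with the translation lattice of the crystallographic cusp stabilizer (Bieberbach plus the cusp cross-section theorem), whereas the paper simply extracts a finite-index $\ZZ^2$ subgroup of $C_\Gamma(\gamma)$ and notes that $H_{[2]}/\ZZ^2$ is a torus covering $H_{[2]}/C_\Gamma(\gamma)$ finitely; you are also slightly more explicit about closedness of the subgroups and the passage between left and right coset spaces, which the paper leaves implicit.
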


\begin{proof}
  We can assume $\gamma = f_{[2]} = T_1 \in H_{\tau}$ 
  by \cref{ConjIso}.
  The group $C_\Gamma(\gamma)$ is a discrete elementary group of parabolic type
  by \cref{elemPara}.
  Since $\HH^3/\Gamma$ has finite volume,
  there exists a finite index subgroup $G$ of $C_\Gamma(\gamma)$
  such that $G$ is isomorphic to $\ZZ^2$.
  Hence $H_{[2]}/G \cong \CC/\ZZ^2$ is homeomorphic to the 2-torus.
  Since $H_{[2]}/G$ is a finite-sheeted covering space of $H_{[2]} / C_\Gamma(\gamma)$,
  we have that $H_{[2]} / C_\Gamma(\gamma)$ is compact.
  Therefore, we can apply \cref{discrete_lemma}
  with $G=\PSL(2, \CC)$, $H = H_{[2]}$ and $K = \Gamma$,
  which completes of the proof.
\end{proof}

\begin{lem}\label{elemNotPara} 
  If $\gamma$ is not parabolic 
    and the cardinality of $C_\Gamma(\gamma)$ is infinite, 
    then $C_\Gamma(\gamma)$ is an elementary group of hyperbolic type.
\end{lem}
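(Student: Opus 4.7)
The plan is to verify three conditions on $C_\Gamma(\gamma)$: that it is elementary, that it is not of elliptic type, and that it is not of parabolic type. Once these are established, the trichotomy of elementary discrete groups forces $C_\Gamma(\gamma)$ into the hyperbolic case. The third condition is the main obstacle.

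For the first condition, the non-parabolic element $\gamma$ has a canonical unordered pair of boundary fixed points $\{x,y\} \subset \partial \HH^3$ (the two endpoints of its axis, in either the elliptic or loxodromic case). Any $g \in C_\Gamma(\gamma)$ satisfies $\gamma(g(x)) = g(\gamma(x)) = g(x)$, so $g(x) \in \{x,y\}$; hence the $C_\Gamma(\gamma)$-orbit of $x$ has at most two points, so $C_\Gamma(\gamma)$ is elementary. For the second condition, $C_\Gamma(\gamma)$ is discrete as a subgroup of the Kleinian group $\Gamma$, so if it were of elliptic type then \cref{discElemEllipt} would force it to be finite, contradicting the hypothesis that $|C_\Gamma(\gamma)| = \infty$.

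For the third condition, I would argue by contradiction. Assume $C_\Gamma(\gamma)$ is of parabolic type. By \cref{discElemPara}, after conjugating inside $\PSL(2,\CC)$ I may assume $C_\Gamma(\gamma) \subset \Isom^+(\EE^2)$, so every element fixes $\infty \in \partial \HH^3$. Since $\gamma$ is not parabolic, $\gamma$ has a second boundary fixed point $p \in \CC$ (the other endpoint of its axis). Each $g \in C_\Gamma(\gamma)$ fixes $\infty$ and permutes the pair $\{\infty, p\}$, so it fixes $p$ as well. Therefore $C_\Gamma(\gamma)$ is contained in the pointwise stabilizer of $\{\infty, p\}$, which is the one-parameter subgroup $\{z \mapsto \lambda(z-p)+p \mid \lambda \in \CC^*\} \cong \CC^*$; inside this $\CC^*$, the loxodromic elements are precisely those with $|\lambda| \ne 1$. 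The Ratcliffe result cited in the proof of \cref{elemPara} states that a discrete elementary group of parabolic type contains no loxodromic element, so $C_\Gamma(\gamma)$ is forced into $\{|\lambda| = 1\} \cong S^1$. But a discrete subgroup of $S^1$ is finite, contradicting $|C_\Gamma(\gamma)| = \infty$.

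The hard part is the parabolic-type case. The key insight is that commutation with the non-parabolic $\gamma$ propagates the fixation of $\infty$ (supplied by the parabolic-type conjugation) to the second endpoint $p$ of the axis of $\gamma$, reducing the situation to a subgroup of the axis stabilizer $\CC^*$. The no-loxodromic property of parabolic-type groups then cuts this down to the compact $S^1$ direction, where discreteness immediately yields finiteness and the desired contradiction.
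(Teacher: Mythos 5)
Your proposal is correct, and its overall skeleton (show $C_\Gamma(\gamma)$ is elementary, rule out elliptic type via \cref{discElemEllipt}, rule out parabolic type, conclude by the trichotomy) is the same as the paper's; the elementary and elliptic steps coincide verbatim. Where you genuinely diverge is the exclusion of parabolic type. The paper stays with the orbit picture: the orbit of an endpoint $x$ of the axis $l$ lies in the two-point set $\partial_\infty l$, and then a short case analysis (orbit equal to $\partial_\infty l$, or orbit equal to $\{x\}$ with $\{y\}$ a second finite orbit) is incompatible with parabolic type; this is two lines, uses no conjugation, and does not need the infinitude hypothesis at this step, but it leans on the characterization of parabolic-type groups as having a unique finite boundary orbit, namely their fixed point. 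You instead argue by contradiction through the structural result \cref{discElemPara}: conjugate $C_\Gamma(\gamma)$ into $\Isom^+(\EE^2)$, use commutation with the non-parabolic $\gamma$ to force every element to fix the second endpoint $p$ of its axis, land the group in the rotation group about that axis ($\cong S^1$), and kill it by ``discrete subgroup of a compact group is finite,'' contradicting $|C_\Gamma(\gamma)|=\infty$. Your route costs a heavier input (the classification theorem behind \cref{discElemPara} plus a conjugation) and uses the infinitude hypothesis twice, but it is cleanly self-contained in that it needs only the stated ``only if'' direction of \cref{discElemPara} rather than the uniqueness-of-finite-orbit property the paper's case analysis implicitly invokes. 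One small remark: the no-loxodromic fact you attribute to the Ratcliffe citation in \cref{elemPara} is not needed as an external input here (and that citation concerns discrete groups containing a parabolic); once your group sits in $\Isom^+(\EE^2)$, its elements are $z \mapsto az+b$ with $|a|=1$, so none is loxodromic and the reduction to $|\lambda|=1$ is immediate. This does not affect correctness.
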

\begin{proof}
  Suppose that $\gamma$ is not parabolic.
  From the classification of transformations,
  there uniquely exists a geodesic $l$ in $\HH^3$
  such that the restriction of $\gamma$ 
  to the set $\partial_\infty (l)$ 
  is the identity map.
  Note that the cardinality of $\partial_\infty(l)$ is $2$. 
  Then
  any $g \in C_\Gamma(\gamma)$ and any $x \in \partial_\infty (l)$
  satisfy
  \begin{equation}
    g(x) = g(\gamma(x)) = \gamma(g(x)).
  \end{equation}
  Hence $g(x)$ is a fixed point of 
  $\gamma|_{\partial_\infty(l)}$,
  and we have $g(x) \in \partial_\infty(l)$.
  Hence
  for any $x \in \partial_\infty (l)$,
  $C_\Gamma(\gamma) \cdot x \subset \partial_\infty (l)$.
  Therefore
  $C_\Gamma(\gamma)$ is elementary.
  Assume that the cardinality of $C_\Gamma(\gamma)$ is infinite.
  Then $C_\Gamma(\gamma)$ is not of elliptic type
  by \cref{discElemEllipt}.
  Let $\partial_\infty(l) =\{x,y\}$.
  Then we have $C_\Gamma(\gamma) \cdot x \subset \partial_\infty(l)$.
  If $C_\Gamma(\gamma) \cdot x = \partial_\infty(l)$,
  then $C_\Gamma(\gamma)$ is not parabolic
  since $\# C_\Gamma(\gamma) \cdot x = 2$.
  If $C_\Gamma(\gamma) \cdot x = \{x\}$,
  then $C_\Gamma(\gamma)$ is not parabolic
  since $C_\Gamma(\gamma) \cdot y$ is another finite orbit.
  Therefore
  $C_\Gamma(\gamma)$ is an elementary group of hyperbolic type,
  which completes of the proof.
\end{proof}

\begin{prop}\label{discreteNotPara}
  Let $\Gamma$ be a Kleinian group
  and
  $\gamma$ be a non-trivial element in $\Gamma$.
  Suppose that $\gamma$ is not parabolic
  and the cardinality of $C_\Gamma(\gamma)$ is infinite.
  Then
  the image of the map 
  $\psi: Q(\Gamma, \gamma) \to Q(\PSL(2, \CC), H_\tau, \sigma_\tau)$ 
  in \cref*{inducedHom}
  is discrete in $H_\tau \backslash \PSL(2, \CC)$.
\end{prop}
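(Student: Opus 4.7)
The plan mirrors the proof of \cref{discretePara}, with the flat cusp torus replaced by a loxodromic cyclic quotient, and concludes by invoking \cref{discrete_lemma}. First, by \cref{ConjIso} one may conjugate so that $\gamma = f_\tau$, giving $C_\Gamma(\gamma) = H_\tau \cap \Gamma$ and reducing $\psi$ to the map induced by the inclusion $\Gamma \hookrightarrow \PSL(2, \CC)$ followed by the projection $\PSL(2,\CC) \to H_\tau \backslash \PSL(2,\CC)$. By \cref{elemNotPara}, $C_\Gamma(\gamma)$ is a discrete elementary subgroup of hyperbolic type, and hence by \cref{discElemHyp} contains a loxodromic element $\alpha$ such that $\langle \alpha \rangle$ has finite index in $C_\Gamma(\gamma)$.

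Next, I verify that $H_\tau / C_\Gamma(\gamma)$ is compact; since $\langle \alpha \rangle$ has finite index in $C_\Gamma(\gamma)$, it suffices to show that $H_\tau / \langle \alpha \rangle$ is compact. Because $\alpha$ commutes with $\gamma = f_\tau$, we have $\alpha \in H_\tau$; note also that $\tau \neq [2]$ because $\gamma$ is not parabolic. For $\tau \in \mathcal{T} \setminus \{[0],[2]\}$, $H_\tau \cong \CC^*$ via $S_\lambda \mapsto \lambda$, and $\alpha = S_\mu$ with $|\mu| \neq 1$ because $\alpha$ is loxodromic, so $\langle \alpha \rangle$ acts cocompactly on $\CC^* \cong \RR \times S^1$ by translating the $\RR$-factor by $\log|\mu|$; thus $H_\tau / \langle \alpha \rangle$ is homeomorphic to a $2$-torus. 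For $\tau = [0]$, $H_\tau$ has two connected components, each homeomorphic to $\CC^*$, and the loxodromic $\alpha$ necessarily lies in the identity component $\{S_\lambda \mid \lambda \in \CC^*\}$, so the same argument yields a compact quotient consisting of two tori.

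Finally, \cref{discrete_lemma} applies with the topological group $\PSL(2, \CC)$ (locally compact and Hausdorff), $H = H_\tau$ (closed as the centralizer of $f_\tau$), and $K = \Gamma$ (closed and discrete as a Kleinian group): one has $H \cap K = C_\Gamma(\gamma)$, and $H/(H \cap K)$ is compact by the previous step, so the image of $\Gamma$ in $H_\tau \backslash \PSL(2, \CC)$ is discrete, which is precisely the image of $\psi$. I expect the most delicate step to be the compactness of $H_\tau / \langle \alpha \rangle$: once $H_\tau$ is identified with (a possibly disconnected copy of) $\CC^*$ and $\alpha$ is recognized to act by multiplication by $\mu$ with $|\mu| \neq 1$, cocompactness is immediate, but the disconnectedness of $H_{[0]}$ must be handled as a separate case.
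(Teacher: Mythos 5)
Your proposal is correct and follows essentially the same route as the paper: conjugate so that $\gamma = f_\tau$, use \cref{elemNotPara} and \cref{discElemHyp} to find a loxodromic generating a finite-index cyclic subgroup of $C_\Gamma(\gamma)$, deduce compactness of $H_\tau / C_\Gamma(\gamma)$ from the torus (or two-torus) quotient, and conclude with \cref{discrete_lemma}. Your explicit observation that a loxodromic element of $H_{[0]}$ must lie in the component $\{S_\lambda \mid \lambda \in \CC^*\}$ (since the $S'_\lambda$ are involutions) is a nice touch the paper leaves implicit.
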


\begin{proof}
  We can assume 
  $\gamma = f_{\tau} \in H_{\tau}$
  for some $\tau \in \mathcal{T} \setminus \{[2]\}$
  by \cref{ConjIso}.
  The subgroup $C_\Gamma(\gamma)$ is an elementary discrete subgroup 
  of hyperbolic type according to \cref{elemNotPara}.
  Therefore,
  there exists a finite index subgroup $G$ of $C_\Gamma(\gamma)$
  generated by a loxodromic transformation $g \in C_\Gamma(\gamma)$
  as stated in \cref{discElemHyp}.
  Since $C_\Gamma(\gamma)$ is contained in $H_\tau$,
  we can assume that
  $g = S_{\mu}$ for $\mu \in \CC$ with $|\mu| > 1$.
  Then $H_\tau / G$ is compact.
  Indeed,
  $H_\tau / G$ is 
  homeomorphic to the 2-torus $\CC^*/ \langle S_\mu \rangle_\Grp$
  if $\tau \neq [0]$,
  or is homeomorphic to the disjoint union
  $(\CC^*/ \langle S_\mu \rangle_\Grp) \sqcup 
  (\CC^*/ \langle S_\mu \rangle_\Grp)$ of two of the 2-tori
  if $\tau = [0]$.
  Since $G$ is a finite index subgroup in $C_\Gamma(\gamma)$,
  the space $H_\tau / G$ is 
  a finite-sheeted covering space of $H_\tau / C_\Gamma(\gamma)$,
  thus $H_\tau / C_\Gamma(\gamma)$ is compact.
  Therefore we can apply \cref{discrete_lemma}
  by $G=\PSL(2, \CC)$, $H = H_{\tau}$ and $K = \Gamma$,
  which is complete of the proof.
\end{proof}

Finally, we show the main theorem.

\begin{proof}[Proof of \cref{mainthm}]
  We use the notation in \cref{canonicalmap}.
  There exists $\tau \in \mathcal{T}$ such that $\gamma \in X_\tau$
  since $\gamma$ is a non-trivial element.
  By \cref{inducedHom},
  the map $\psi: Q(\Gamma, \gamma) \to Q(\PSL(2, \CC), H_\tau, \sigma_\tau)$
  is an injective quandle homomorphism.
  Since $F: Q(\PSL(2, \CC), H_\tau, \sigma_\tau) \to \Conj(X_\tau)$
  is a quandle isomorphism
  and 
  the inclusion map $i: \Conj(X_\tau) \to \Conj(\PSL(2, \CC))$
  is an injective quandle homomorphism,
  the canonical map $i \circ F \circ \psi: Q(\Gamma, \gamma) \to \Conj(\PSL(2, \CC))$ 
  is an injective quandle homomorphism.

  Suppose that $\Gamma$ and $\gamma$ satisfy one of the assumptions.
  From \cref{discretePara} or \cref{discreteNotPara},
  the image of $\psi$ is discrete in $H_\tau \backslash \PSL(2, \CC)$.
  Since $F$ is a homeomorphism,
  the image of the map $F \circ \psi$ is discrete in $X_\tau$.
  Since $X_\tau$ is endowed with the relative topology from $\PSL(2, \CC)$,
  the image of the canonical map $i \circ F \circ \psi$
  is discrete in $\Conj(\PSL(2, \CC))$,
  which completes of the proof.
\end{proof}

    \section*{Acknowledgement}
    This work was supported by JST, the establishment of university 
    fellowships towards the creation of science technology innovation, 
    Grant Number JPMJFS2138.
    \bibliographystyle{plain}
    \bibliography{reference}

\begin{thebibliography}{10}

\bibitem{Eisermann-2014-QuandleCoveringsTheirGalois}
Michael Eisermann.
\newblock Quandle coverings and their {{Galois}} correspondence.
\newblock {\em Fundamenta Mathematicae}, 225(1):103--167, 2014.

\bibitem{Fenn-1992-RacksLinksCodimensionTwo}
Roger Fenn and Colin Rourke.
\newblock Racks and links in codimension two.
\newblock {\em Journal of Knot Theory and its Ramifications}, 1(4):343--406, 1992.

\bibitem{Godement-2017-IntroductionTheoryLieGroups}
Roger Godement.
\newblock {\em Introduction to the {{Theory}} of {{Lie Groups}}}.
\newblock Universitext. Springer International Publishing, Cham, 2017.

\bibitem{Inoue-2010-QuandleHyperbolicVolume}
Ayumu Inoue.
\newblock Quandle and hyperbolic volume.
\newblock {\em Topology and its Applications}, 157(7):1237--1245, 2010.

\bibitem{Inoue-2014-QuandleHomologyComplexVolume}
Ayumu Inoue and Yuichi Kabaya.
\newblock Quandle homology and complex volume.
\newblock {\em Geometriae Dedicata}, 171:265--292, 2014.

\bibitem{Joyce-1982-ClassifyingInvariantKnotsKnot}
David Joyce.
\newblock A classifying invariant of knots, the knot quandle.
\newblock {\em Journal of Pure and Applied Algebra}, 23(1):37--65, 1982.

\bibitem{Kamada-2017-SurfaceKnots4Space}
Seiichi Kamada.
\newblock {\em Surface-{{Knots}} in 4-{{Space}}}.
\newblock Springer {{Monographs}} in {{Mathematics}}. Springer Singapore, Singapore, 2017.

\bibitem{Matveev-1982-DistributiveGroupoidsKnotTheory}
S.~V. Matveev.
\newblock Distributive groupoids in knot theory.
\newblock {\em Mat. Sb. (N.S.)}, (no. 1):78--88, 160, 1982.

\bibitem{Ratcliffe-2019-FoundationsHyperbolicManifolds}
John~G. Ratcliffe.
\newblock {\em Foundations of Hyperbolic Manifolds}.
\newblock Number 149 in Graduate Texts in Mathematics. Springer, Cham, Switzerland, third edition edition, 2019.

\bibitem{Rubinsztein-2007-TOPOLOGICALQUANDLESINVARIANTSLINKS}
Ryszard~L. Rubinsztein.
\newblock {{TOPOLOGICAL QUANDLES AND INVARIANTS OF LINKS}}.
\newblock {\em Journal of Knot Theory and Its Ramifications}, 16(06):789--808, August 2007.

\bibitem{Ryder-1996-AlgebraicConditionDetermineWhether}
Hayley Ryder.
\newblock An algebraic condition to determine whether a knot is prime.
\newblock {\em Mathematical Proceedings of the Cambridge Philosophical Society}, 120(3):385--389, October 1996.

\end{thebibliography}
    \address{
        (R. Kai) 
        Department of Mathematics, 
        Graduate School of Science, 
        Osaka Metropolitan University, 
        3-3-138, Sugimoto, 
        Sumiyoshi-ku, Osaka, 558-8585, Japan}  
    \email{sw23889b@st.omu.ac.jp}

\end{document}